\newtheorem{theorem}{Theorem}[section]
\newtheorem{lemma}[theorem]{Lemma}
\newtheorem{proposition}[theorem]{Proposition}
\newtheorem{corollary}[theorem]{Corollary}
\theoremstyle{definition}
\theoremstyle{remark}
\newtheorem{remark}[theorem]{Remark}
\DeclareMathOperator{\Ricci}{Ric}
\numberwithin{equation}{section}
\begin{document}

\title{Critical Tori for Mean Curvature Energies in Killing Submersions}


\author{\'Alvaro P\'ampano}
\address{Department of Mathematics, University of the Basque Country, Bilbao, Spain.}
\curraddr{Department of Mathematics, Idaho State University, Pocatello, ID, U.S.A.}
\email{alvaro.pampano@ehu.eus}
\thanks{The author would like to thank Professor J. Arroyo for the images of Figure \ref{dib} and for kindly reviewing the paper.}

\subjclass[2010]{Primary: 53C42. Secondary: 58E10; 53C44}

\keywords{BCV Spaces, Binormal Evolution Tori, Closed Critical Curves, Killing Submersions, Mean Curvature Energies, Vertical Tori}

\date{\today}

\dedicatory{}

\begin{abstract}
We study surface energies depending on the mean curvature in total spaces of Killing submersions, which extend the classical notion of Willmore energy. Based on a symmetry reduction procedure, we construct vertical tori critical for these mean curvature energies. These vertical tori are based on closed curves critical for curvature energy functionals in Riemannian 2-space forms.

The binormal evolution of these critical curves in Riemannian 3-space forms generates rotational tori solutions for an ample family of Weingarten surfaces. Therefore, we also introduce some correspondence results between these two types of tori and illustrate their relation.
\end{abstract}

\maketitle

\section{Introduction}\label{Sec1}

For an immersed surface, the \emph{mean curvature} is, arguably, the most important extrinsic invariant. Indeed, in many cases, it gives us enough information to understand the extrinsic geometry of an immersed surface into the ambient space. The term mean curvature was first coined by S. Germain, although it was previously used by J. B. Meusnier to characterize surfaces that locally minimize area.

In 1811, S. Germain suggested that the free energy controlling the physical system associated with an elastic plate should be measured by an integral over the plate surface. She also pointed out that the integrand should be a symmetric and even-degree polynomial in the principle curvatures, \cite{G}.

One of the simplest functionals of this type is the so-called \emph{total squared curvature} or \emph{bending energy}. For a surface $S$ in $\mathbb{R}^3$ the bending energy is given by
$$\mathcal{W}(S)=\int_S H^2\,dA\,,$$
where $H$ denotes the mean curvature of $S$.

Later, in the 1920's, Blaschke's school studied the variational problem associated with the bending energy for surfaces immersed in $\mathbb{R}^3$, \cite{Bl}. In particular, G. Thomsen obtained the Euler-Lagrange equation of $\mathcal{W}(S)$ by means of computing the first variation formula, \cite{T} .

The bending energy was reintroduced years later by T. J. Willmore who investigated the minima for the bending energy within a given topological class, obtaining many interesting results, \cite{Wi}. Therefore, the functional $\mathcal{W}(S)$ is also known as the \emph{Willmore energy}. As a particular case, in 1965, he proposed the following conjecture: \emph{the Willmore energy of every smooth immersed torus is greater or equal to $2\pi^2$, with equality for the Clifford torus}.

Since then, there has been an intensive investigation about this subject (see, \cite{B}, \cite{BFG}, \cite{N}, \cite{R}, \cite{To},...). In fact, the conjecture was proved by F. C. Marques and A. Neves in \cite{MN}. The notion of Willmore surfaces has also been generalized to other ambient spaces so that the conformal invariance property is preserved, \cite{Ch}, although there are not many results in background spaces with non-constant sectional curvature.

In ambient spaces of constant curvature, this generalization is done by introducing the constant curvature in the integrand. Furthermore, in this setting, the Willmore energy and its critical points, \emph{Willmore surfaces}, have strong connections in other areas with applications to the analysis of elastic plates, to bosonic string theories, computer vision and sigma models \cite{BCO}, to mention a few. Several of these applications are based on a beautiful link between Willmore surfaces and elastica which can be established using a symmetry reduction procedure \cite{Pi}.

The theory of elastic curves is a classical subject which has been studied by Galileo, the Bernoulli family, Euler, Kirchhoff, Born and many others. In particular, elasticae played an important role in the development of Calculus of Variations. For a detailed historical background, we refer the reader to \cite{L}.

Following a classical model of D. Bernoulli, \emph{elastic curves} are the minimizers (or, more generally, critical curves) of the bending energy. Notice that, for curves $\gamma$ isometrically immersed in a Riemannian manifold, $H$ is nothing but the geodesic curvature of $\gamma$, $\kappa$, and $\mathcal{W}(S)$ reduces to
$$\mathcal{E}(\gamma)=\int_\gamma \kappa^2\,ds\,.$$
Using this formulation of elastica as a variational problem, L. Euler described the possible qualitative types of elastic curves in $\mathbb{R}^2$ with a constraint on the length of the curves, \cite{E}.

In recent decades, the notion of elastic curves has been generalized in different ways. On one hand, different ambient spaces have been considered \cite{BG} and; on the other hand, the integrand has been substituted by arbitrary functions depending on the curvatures of the curve \cite{ACG}.

Returning to surfaces, in contrast to Willmore energies, there are motivations in Physics and Biology that have lead researchers to consider more complicated curvature functionals. For instance, in 1973, W. Helfrich proposed a model for fluid membranes given by the functional \cite{H}
$$\mathcal{H}(S)=\int_S\left(a\left(H+b\right)^2+c K_S\right)\,dA$$
where $a$, $b$ and $c$ are some constants depending on the material of the membrane and $K_S$ is the \emph{Gaussian curvature} of the surface. Recently, extensions to arbitrary functionals depending on the mean and Gaussian curvatures have been studied in the literature for surfaces immersed in 3-space forms (see \cite{GTT} and references therein).

However, given the complexity of modern physical and biophysical models, beyond ambient spaces with constant curvature, it seems natural to consider also surface immersions into ambient spaces with fewer symmetries. 

In this paper, we study critical tori for energies depending on their mean curvature (here, these energies are going to be called \emph{mean curvature energies}) in total spaces of Killing submersions. A remarkable family includes the homogeneous 3-spaces. A Riemannian 3-manifold is said to be homogeneous if for every two points there exists an isometry mapping them. For homogeneous 3-spaces there are three possibilities for the degree of rigidity, since they may have an isometry group of dimension $6$, $4$ or $3$. The maximum rigidity $6$, corresponds to 3-space forms. Thus, this characteristic makes homogeneous 3-spaces (and, therefore, Killing submersions) ideal for modeling physical systems, which have stimulated the interest of many authors.

In Section \ref{Sec2}, we define the mean curvature energy functional for any arbitrary function acting on the space of isometric immersions of a surface in a 3-dimensional Riemannian manifold and we obtain the associated Euler-Lagrange equation.

Then, in Section \ref{Sec3}, focusing mainly on surfaces living in total spaces of Killing submersions, we establish a connection between critical tori for mean curvature energies and closed critical curves for curvature energies in the base surface.

Several families of examples are shown in Section \ref{Sec4} by restricting ourselves to homogeneous 3-spaces. In this setting, total spaces of Killing submersions can be locally described as Bianchi-Cartan-Vranceanu spaces while the base surface happens to have constant curvature. Therefore, since the variational problem for curvature energies acting on the space of isometrically immersed curves in surfaces with constant curvature is well known for some cases, we can construct many examples of critical tori in the corresponding homogeneous 3-space.

It turns out that after binormal evolution, critical curves in these surfaces are the profile curves of rotational tori in 3-space forms. Section \ref{Sec5} is devoted to the construction of this new family of tori and to proving that they are Weingarten surfaces, i.e. they verify a functional relation between their principal curvatures.

In Section \ref{Sec6}, we describe this correspondence between critical tori for mean curvature energies in total spaces of Killing submersions and Weingarten rotational tori in 3-space forms. We finish the paper with an illustration of an interesting particular case related to the standard Hopf mapping between spheres. This case connects a Blaschke's type mean curvature energy with minimal tori in 3-spheres via the Blaschke's variational problem in the 2-sphere.

\section{Mean Curvature Energies}\label{Sec2}

Let $(M,g=\langle,\rangle)$ be a 3-dimensional Riemannian manifold, with Levi-Civita connection $\bar{\nabla}$, and $S$ be a surface. For any isometric immersion of $S$ in $M$, $\phi:S\rightarrow M$ we denote by $dA_\phi$ the induced area element obtained via $\phi$. If we fix $\eta$ to be a unit normal vector field of $\phi$, we have that the \emph{mean curvature vector} of $\phi$ can be written as $${\vec H}_\phi=H_\phi \eta\,,$$ where $H_\phi$ denotes the \emph{mean curvature function} of $\phi$. 

Then, in the space of isometric immersions of $S$ in $M$, denoted throughout the paper by $I(S,M)$, we define the following \emph{mean curvature energy} functional
\begin{equation}\label{energy}
\mathcal{F}(S)\equiv \mathcal{F}(S,\phi):=\int_S P\left(2 H_\phi\right)\,dA_\phi\,,
\end{equation}
where $P(u)$ is a smooth function in an adequate domain.

A \emph{variation} of $\phi\in I(S,M)$ is a smooth map $\Phi: S\times \left(-\varepsilon,\varepsilon\right)\rightarrow M$ where $\Phi(p,0)\equiv \phi_0(p)=\phi(p)$, $\forall\, p\in S$ and which satisfies that for any $\varsigma\in\left(-\varepsilon,\varepsilon\right)$, the map $\phi_\varsigma\equiv \Phi(-,\varsigma)$ belongs to $I(S,M)$. In this setting, there exists a vector field along $\phi$,
$$V(p)\equiv V(p,0):=\Phi_*\left(\frac{\partial}{\partial\varsigma}\left(p,\varsigma\right)\right)\big\lvert_{\varsigma=0}\,,$$
which is called the \emph{variation vector field} associated to $\Phi$. Thus, we can identify the tangent space $T_\phi\left(I(S,M)\right)$ with the space of vector fields along $\phi$ and, consequently, we have
$$\partial\mathcal{F}(S,\phi)[V]=\frac{\partial}{\partial\varsigma}_{\big\lvert_{\varsigma=0}}\left(\int_S P\left(2 H_{\phi_\varsigma}\right)\,dA_{\phi_\varsigma}\right).$$
Notice that for closed surfaces $S$, after reparametrization if necessary, the variations may be assumed to be normal to $S$. Hence, the one-parameter family of immersions $\Phi(p,\varsigma)=\phi_\varsigma(p)$ can be given by
$$\phi_\varsigma(p)=\phi_0(p)+\varsigma \varphi(p)\eta(p)\,,$$
where $\varphi$ is a smooth function. Moreover, in this case the variation vector field simplifies to $V(p)=\varphi(p)\eta(p)$.

The main purpose of this section consists of computing the field equations, or Euler-Lagrange equations, associated with the functionals $\mathcal{F}(S)$, \eqref{energy}. To this end, we collect some formulae that will be needed in the sequel in the following lemmas. For the sake of simplicity, proofs are omitted. They can be obtained using similar computations to those included in \cite{W}. For the same reason, the symbol $\phi$ is avoided in our notation.

We begin by introducing the variation of the mean curvature vector field of $\phi$.

\begin{lemma}\label{lem1} Let $\vec{H}(p,\varsigma)$ be the mean curvature vector field of $\phi_\varsigma$ at $p\in S$. Then
$$D_\varsigma {\vec H}_{\big\lvert_{\varsigma=0}}=\frac{1}{2}\left(\Delta V^{\perp}+\widetilde{A}(V^\perp)+\Ricci(\eta,\eta)V^\perp\right)+D_{V^T}{\vec H}\,,$$
where $\Delta$ is the Laplacian associated with the connection $D$ in the normal bundle of $\phi(S)$, $\widetilde{A}$ stands for the Simons operator \cite{W} and $\Ricci$ is the Ricci curvature of $M$. Here, $(\,)^T$ and $(\,)^\perp$ denote tangential and normal components, respectively.
\end{lemma}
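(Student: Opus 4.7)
The plan is to decompose the variation vector field as $V = V^T + V^\perp$ and treat the two components separately. The tangential piece $V^T$ corresponds infinitesimally to a reparametrization of $\phi$, so its contribution to $D_\varsigma \vec{H}$ is just $D_{V^T}\vec{H}$ by general principles (tensoriality of the mean curvature under reparametrizations, combined with the standard Lie-derivative argument). The problem therefore reduces at once to the case of a purely normal variation $V = \varphi \eta$, for which the goal becomes to verify
$$D_\varsigma \vec{H}_{\big\lvert_{\varsigma=0}} = \frac{1}{2}\bigl(\Delta V^\perp + \widetilde{A}(V^\perp) + \Ricci(\eta,\eta)V^\perp\bigr).$$

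To attack this, I would pick a local orthonormal tangent frame $\{e_1, e_2\}$ at $p \in S$ and extend it along $\Phi$ so that $[\partial_\varsigma, e_i]|_{\varsigma=0} = 0$, absorbing any residual tangential discrepancy into the $D_{V^T}\vec{H}$ term already handled. Then I would write
$$\vec{H}_\varsigma = \frac{1}{2}\sum_{i=1}^{2}\bigl(\bar{\nabla}_{e_i} e_i\bigr)^\perp,$$
with normal projection taken with respect to $\phi_\varsigma$. Differentiating in $\varsigma$ at $\varsigma = 0$, the derivative either falls on the covariant derivative $\bar{\nabla}_{e_i} e_i$ or on the normal projection itself. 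For the first type of term I would employ the ambient curvature identity
$$\bar{\nabla}_{\partial_\varsigma}\bar{\nabla}_{e_i} e_i = \bar{\nabla}_{e_i}\bar{\nabla}_{\partial_\varsigma} e_i + \bar{R}(V, e_i)e_i,$$
substitute $\partial_\varsigma|_0 = \varphi \eta$, and trace over $i$.

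The two iterated normal covariant derivatives of $\varphi$ arising from $\bar{\nabla}_{e_i}\bar{\nabla}_{e_i}(\varphi\eta)$ assemble, after taking normal components, into the rough Laplacian $\Delta V^\perp$ in the normal bundle. The trace of the ambient curvature contribution produces $\Ricci(\eta,\eta)V^\perp$ after Gauss-equation cancellations with the intrinsic sectional curvatures along $S$. Finally, the derivatives that land on the normal projection, combined with the shape-operator terms generated by $\bar{\nabla}_{\partial_\varsigma}\eta$ interacting with $\bar{\nabla}_{e_i} e_i$, reorganize into the Simons term $\widetilde{A}(V^\perp)$; the overall prefactor $1/2$ reflects the convention $\vec{H} = \frac{1}{2}\operatorname{tr}(II)$.

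The main obstacle is the careful bookkeeping of tangential versus normal components: whenever $\bar{\nabla}_{\partial_\varsigma} e_i$ appears, it carries both tangential and normal pieces, and only after further covariant differentiation and projection do these pieces reorganize into the three clean terms $\Delta V^\perp$, $\widetilde{A}(V^\perp)$, and $\Ricci(\eta,\eta)V^\perp$. To keep this under control, I would use the Codazzi equation to commute derivatives of the second fundamental form and invoke the standard characterization of the Simons operator, $\xi \mapsto \sum_{i,j}\langle h(e_i,e_j),\xi\rangle h(e_i,e_j)$, on normal fields. The computation is in spirit the one carried out by Weiner \cite{W}, whose calculation the lemma explicitly invokes.
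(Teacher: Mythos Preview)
The paper does not actually prove this lemma: immediately before stating it, the author writes ``For the sake of simplicity, proofs are omitted. They can be obtained using similar computations to those included in \cite{W}.'' So there is no in-paper argument to compare against; the only benchmark is Weiner's computation, which the paper cites and which you also invoke.

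Your outline is the standard one and is essentially what Weiner does: split $V$ into tangential and normal parts, dispose of $V^T$ by the reparametrization argument, extend a local orthonormal frame so that $[\partial_\varsigma,e_i]$ vanishes at $\varsigma=0$, and then differentiate $\vec{H}_\varsigma=\tfrac{1}{2}\sum_i(\bar\nabla_{e_i}e_i)^\perp$ using $\bar\nabla_{\partial_\varsigma}\bar\nabla_{e_i}e_i=\bar\nabla_{e_i}\bar\nabla_{\partial_\varsigma}e_i+\bar R(V,e_i)e_i$. The identification of the three resulting pieces as $\Delta V^\perp$, $\widetilde A(V^\perp)$, and $\Ricci(\eta,\eta)V^\perp$ is correct, and your description of $\widetilde A$ as $\xi\mapsto\sum_{i,j}\langle h(e_i,e_j),\xi\rangle\,h(e_i,e_j)$ matches Weiner's. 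One small imprecision: the $\Ricci(\eta,\eta)V^\perp$ term comes directly from the normal component of $\sum_i\bar R(\varphi\eta,e_i)e_i$, not via ``Gauss-equation cancellations with the intrinsic sectional curvatures''; no Gauss equation is needed at that step. Otherwise your sketch is sound and aligned with the reference the paper defers to.
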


In the following lemma, the expression for the variation of the area element of $\phi$ is described.

\begin{lemma}\label{lem2} Let $dA_{\phi_\varsigma}$ be the induced area element via the immersion $\phi_\varsigma$. Then its variation is given by
$$\frac{\partial}{\partial\varsigma}_{\big\lvert_{\varsigma=0}} dA_{\phi_\varsigma}=-2\langle {\vec H},V\rangle dA_\phi+d\Theta\,,$$
where $\Theta$ is the one-form defined by $\Theta(X)=d A_\phi\left(V^T,X\right)$.
\end{lemma}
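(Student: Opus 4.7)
The plan is to work in a local coordinate chart $(u^1,u^2)$ on $S$ and track the variation of $dA_{\phi_\varsigma}=\sqrt{\det g_{ij}(\varsigma)}\,du^1\wedge du^2$, where $g_{ij}(\varsigma)=\langle(\phi_\varsigma)_*\partial_i,(\phi_\varsigma)_*\partial_j\rangle$ are the components of the induced metric. The standard derivative-of-determinant identity reduces the problem to computing the trace $g^{ij}\partial_\varsigma g_{ij}|_{\varsigma=0}$, via
$$\frac{\partial}{\partial\varsigma}\bigg\lvert_{\varsigma=0}\sqrt{\det g_{ij}(\varsigma)}=\tfrac{1}{2}\,g^{ij}\frac{\partial g_{ij}}{\partial\varsigma}\bigg\lvert_{\varsigma=0}\sqrt{\det g_{ij}}.$$

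Next, I would exploit that $[\partial_\varsigma,\partial_i]=0$ as coordinate fields on $S\times(-\varepsilon,\varepsilon)$ and that $\bar{\nabla}$ is torsion-free to rewrite $\bar{\nabla}_{\partial_\varsigma}(\phi_\varsigma)_*\partial_i=\bar{\nabla}_{(\phi_\varsigma)_*\partial_i}\Phi_*\partial_\varsigma$, which at $\varsigma=0$ collapses to $\bar{\nabla}_{\phi_*\partial_i}V$. Consequently
$$\frac{\partial g_{ij}}{\partial\varsigma}\bigg\lvert_{\varsigma=0}=\langle\bar{\nabla}_{\phi_*\partial_i}V,\phi_*\partial_j\rangle+\langle\phi_*\partial_i,\bar{\nabla}_{\phi_*\partial_j}V\rangle,$$
and the decomposition $V=V^T+V^\perp$ splits the contribution into two pieces with essentially different flavors.

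For the normal part, orthogonality of $V^\perp$ to the tangent plane yields $\langle\bar{\nabla}_{\phi_*\partial_i}V^\perp,\phi_*\partial_j\rangle=-\langle V^\perp,\bar{\nabla}_{\phi_*\partial_i}\phi_*\partial_j\rangle$, which isolates the second fundamental form $\mathrm{II}(\partial_i,\partial_j)$. Tracing against $g^{ij}$ produces $2\vec{H}$, so this piece contributes $-2\langle\vec{H},V\rangle\,dA_\phi$ to $\partial_\varsigma dA_{\phi_\varsigma}|_{\varsigma=0}$ (using again that $\vec{H}$ is normal, so $V^\perp$ may be replaced by $V$). For the tangential part, the symmetrized expression $\langle\bar{\nabla}_{\phi_*\partial_i}V^T,\phi_*\partial_j\rangle+\langle\phi_*\partial_i,\bar{\nabla}_{\phi_*\partial_j}V^T\rangle$ is exactly the $(i,j)$-entry of the Lie derivative $\mathcal{L}_{V^T}g$ of the induced metric, so its contribution to $\partial_\varsigma dA_{\phi_\varsigma}|_{\varsigma=0}$ is $\mathcal{L}_{V^T}dA_\phi$. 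Since $dA_\phi$ is a top-degree form on the surface $S$ it is closed, and Cartan's magic formula reduces this to $d(\iota_{V^T}dA_\phi)$. Recognizing the interior product $\iota_{V^T}dA_\phi$ as precisely the one-form $\Theta$ of the statement delivers the $d\Theta$ term, and summing both pieces gives the asserted formula.

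The computation is essentially bookkeeping, but the one point that must be handled with care is the identification $\bar{\nabla}_{\partial_\varsigma}(\phi_\varsigma)_*\partial_i=\bar{\nabla}_{(\phi_\varsigma)_*\partial_i}V$ at $\varsigma=0$, which presupposes interpreting $V$ as the restriction to $\varsigma=0$ of a vector field along the full map $\Phi$; this is also the only place where the torsion-freeness of $\bar{\nabla}$ enters. I expect this to be the only real subtlety, since the clean separation of the normal and tangential contributions, as well as the passage from $\mathcal{L}_{V^T}dA_\phi$ to the exact form $d\Theta$ via Cartan's formula, are mechanical once that identification is in place.
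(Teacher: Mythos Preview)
Your argument is correct and is the standard derivation of the first variation of area. The paper, however, does not actually prove Lemma~\ref{lem2}: it explicitly states that the proofs of both lemmas are omitted ``for the sake of simplicity'' and refers the reader to Weiner~\cite{W} for similar computations. So there is no in-paper proof to compare against; your write-up supplies exactly the kind of computation the paper is alluding to. The decomposition $V=V^T+V^\perp$, the identification of the normal contribution with $-2\langle\vec H,V\rangle\,dA_\phi$ via the trace of the second fundamental form, and the recognition of the tangential contribution as $\mathcal{L}_{V^T}dA_\phi=d(\iota_{V^T}dA_\phi)=d\Theta$ via Cartan's formula are all sound, and the subtlety you flag (swapping $\bar\nabla_{\partial_\varsigma}$ and $\bar\nabla_{\partial_i}$ via torsion-freeness along the map $\Phi$) is indeed the only non-mechanical step.
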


Now, we are in conditions to prove the main result of the section.

\begin{theorem} Let $I(S,M)$ be the space of isometric immersions of a surface $S$ in a 3-dimensional Riemannian manifold $M$. Consider the mean curvature energy functional
$$\mathcal{F}(S)=\int_S P\left(2 H_\phi\right)\,dA_\phi$$
acting on $I(S,M)$. Then the Euler-Lagrange equation of $\mathcal{F}(S)$ is
\begin{equation}
\Delta P' +P'\left(4 H_\phi^2-2K_S+2R+\Ricci (\eta,\eta)\right)-4 P H_\phi=0\,, \label{EL}
\end{equation}
where $P'$ represents the derivative of $P$ with respect to $H_\phi$ and $R$ is the extrinsic Gaussian curvature.
\end{theorem}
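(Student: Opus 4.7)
Given that normal variations $V=\varphi\eta$ with $\varphi\in C^\infty(S)$ suffice for closed $S$ (as noted in the excerpt), my plan is to differentiate $\mathcal{F}$ under the integral via the product rule, invoke Lemmas \ref{lem1} and \ref{lem2} for $\partial_\varsigma H$ and $\partial_\varsigma dA$, integrate by parts on the closed surface, and then read off \eqref{EL} as the coefficient of $\varphi$ in the resulting integrand.

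With $V^T=0$ and $V^\perp=\varphi\eta$, the rank-one normal bundle of the hypersurface $\phi(S)\subset M$ forces $\Delta(\varphi\eta)=(\Delta_S\varphi)\,\eta$ and $\widetilde{A}(\varphi\eta)=|A|^2\varphi\,\eta$, while $D_{V^T}\vec H$ vanishes; reading off the $\eta$-component of the identity in Lemma \ref{lem1} thus gives
\begin{equation*}
\partial_\varsigma H\big\lvert_{\varsigma=0}=\tfrac{1}{2}\bigl(\Delta_S\varphi+(|A|^2+\Ricci(\eta,\eta))\varphi\bigr).
\end{equation*}
Lemma \ref{lem2} collapses to $\partial_\varsigma dA_{\phi_\varsigma}\big\lvert_{\varsigma=0}=-2H\varphi\,dA$ because $\Theta=0$. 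Letting $P'$ denote the derivative of $P$ with respect to $H_\phi$, so that $\partial_\varsigma P(2H_{\phi_\varsigma})=P'\,\partial_\varsigma H$, the product rule under the integral yields
\begin{equation*}
\partial\mathcal{F}(S,\phi)[V]=\int_S\Bigl[\tfrac{1}{2}P'\bigl(\Delta_S\varphi+(|A|^2+\Ricci(\eta,\eta))\varphi\bigr)-2PH\varphi\Bigr]dA.
\end{equation*}

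Two integrations by parts then move $\Delta_S$ from $\varphi$ onto $P'$ without boundary contributions since $S$ is closed. The final ingredient is to rewrite $|A|^2$ via the Gauss equation $K_S=\det A+R$, where $R$ is the sectional curvature of $M$ on the tangent plane $T_pS$ (the paper's ``extrinsic Gaussian curvature''), together with the algebraic identity $4H^2=(\mathrm{tr}\,A)^2=|A|^2+2\det A$; combining these gives $|A|^2=4H^2-2K_S+2R$. Substituting this into the coefficient of $\varphi$, multiplying through by $2$, and applying the fundamental lemma of the calculus of variations to arbitrary $\varphi\in C^\infty(S)$ produces exactly \eqref{EL}.

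The principal technical obstacle is the rank-one reduction of Lemma \ref{lem1}: one must verify that in codimension one the normal Laplacian on $\varphi\eta$ is $(\Delta_S\varphi)\eta$, that the Simons operator becomes multiplication by $|A|^2$, and that extracting $\partial_\varsigma H$ as the $\eta$-coefficient of $D_\varsigma\vec H$ is legitimate (which is precisely where choosing purely normal variations pays off). Once this reduction is in hand, the remaining steps---chain rule through the factor $2$ in the argument of $P$, integration by parts on a closed surface, and a single application of Gauss's equation---are routine.
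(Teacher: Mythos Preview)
Your proposal is correct and follows essentially the same route as the paper: both restrict to normal variations $V=\varphi\eta$, feed Lemmas~\ref{lem1} and~\ref{lem2} into the first variation, integrate by parts, and then identify $\langle\widetilde{A}(\eta),\eta\rangle=|A|^2=4H^2-2K_S+2R$. The only cosmetic difference is that the paper quotes this last identity directly ``from the definition of $\widetilde{A}$,'' whereas you spell it out via the Gauss equation $K_S=\det A+R$ together with $4H^2=|A|^2+2\det A$; this extra line of justification is a welcome clarification rather than a change of strategy.
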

\begin{proof} We begin by computing the first variation formula for $\mathcal{F}(S)$, \eqref{energy}. We have (we are omitting the symbol $\varsigma$, for simplicity)
\begin{eqnarray}
\frac{\partial}{\partial \varsigma}\mathcal{F}(S)&=&\frac{\partial}{\partial \varsigma}\left(\int_S P(2H_\phi)dA_\phi\right)=\int_S \frac{\partial}{\partial \varsigma}\left(P(2H_\phi)\right)dA_\phi+P(2H_\phi)\frac{\partial}{\partial \varsigma}\left(d A_\phi\right)\nonumber\\
&=&\int_S P' \frac{\partial}{\partial \varsigma}\left(H_\phi\right)dA_\phi+P\frac{\partial}{\partial \varsigma}\left(dA_\phi\right)\,,\label{FVF}
\end{eqnarray}
where $P'$ is the derivative of $P$ with respect to $H_\phi$. Now, using Lemma \ref{lem1}, we have that
\begin{eqnarray*}
\frac{\partial}{\partial \varsigma}\left(H_\phi\right)&=&\frac{\partial}{\partial \varsigma}\left(\langle {\vec H}_\phi,\eta\rangle\right)=\langle \frac{\partial}{\partial \varsigma} \left( {\vec H}_\phi\right),\eta\rangle\\
&=& \langle \frac{1}{2}\left(\Delta V^{\perp}+\widetilde{A}(V^\perp)+\Ricci(\eta,\eta)V^\perp\right)+D_{V^T}{\vec H}_\phi,\eta\rangle\,.
\end{eqnarray*}
Thus, substituting this in \eqref{FVF} and applying Lemma \ref{lem2} for the second term in \eqref{FVF}, 
\begin{eqnarray*}
\frac{\partial}{\partial \varsigma}_{\big\rvert_{\varsigma=0}}\mathcal{F}(S)&=&\int_S \frac{P'}{2}\langle \Delta V^{\perp}+\widetilde{A}(V^{\perp})+\Ricci(\eta,\eta)V^\perp,\eta\rangle dA_\phi-\int_S 2P \langle {\vec H}_\phi,V\rangle dA_\phi\\
&&+\int_S P'\langle D_{V^T}{\vec H}_\phi,\eta\rangle dA_\phi+\int_S P d\Theta\,.
\end{eqnarray*}
Notice that the second line in the above formula gives, after integration, boundary terms. From now on, since we are only interested in obtaining the associated Euler-Lagrange equation, it is enough to consider normal variations, i.e. $V=\varphi\eta$. As a consequence, we can omit the boundary terms. Then, by standard arguments involving integration by parts and properties of $\widetilde{A}$, we obtain the following Euler-Lagrange equation
$$\frac{1}{2}\Delta P'+\frac{P'}{2}\langle\widetilde{A}(\eta),\eta\rangle+\frac{P'}{2}\Ricci(\eta,\eta)-2P{H}_\phi=0\,.$$
Finally, from the definition of $\widetilde{A}$, we have that $\widetilde{A}(\eta)=\left(4H_\phi^2-2K_S+2R\right)\eta$, and then the Euler-Lagrange equation boils down to
$$\Delta P' +P'\left(4 H_\phi^2-2K_S+2R+\Ricci (\eta,\eta)\right)-4 P H_\phi=0\,,$$
proving the statement. \end{proof}

\begin{remark} Assume that $M$ is a Riemannian 3-space form with constant sectional curvature $\rho$. Then, $2R+\Ricci (\eta,\eta)=4\rho$ and the Euler-Lagrange equation of $\mathcal{F}(S)$, \eqref{energy}, reduces to 
$$\Delta P' +P'\left(4 H_\phi^2-2K_S+4\rho\right)-4 P H_\phi=0\,.$$
This equation coincides with the Euler-Lagrange equation obtained in \cite{GTT} when the energy does not depend on the Gaussian curvature.\\
In particular, let $P(2H_\phi)=H_\phi^2$ and $\rho=0$, i.e. $\mathcal{F}(S)$ is the (classical) Willmore energy $\mathcal{W}(S)$ in $\mathbb{R}^3$. Then $P'(2H_\phi)=2H_\phi$, since $P'$ denotes the derivative of $P$ with respect to $H_\phi$ (do not confuse with the derivative with respect to the parameter of $P$). We substitute this in the above formula, obtaining
$$\Delta H_\phi+2H_\phi\left(H_\phi^2-K_S\right)=0\,,$$
as expected.
\end{remark}

Finally, observe that closed (compact with no boundary) surfaces solutions of the Euler-Lagrange equation \eqref{EL} are critical points (or, simply, extremals) of $\mathcal{F}(S)$, \eqref{energy}, since the boundary term appearing in the First Variation Formula vanishes.

\section{Critical Vertical Tori}\label{Sec3}

In this section we study critical tori of the mean curvature energies $\mathcal{F}(S)$, \eqref{energy}. Our analysis focuses mainly on tori living in the total space of a Killing submersion.

Let $M$ be a 3-dimensional Riemannian manifold as in the previous section and consider a fixed surface $B$. A Riemannian submersion $\pi:M\rightarrow B$ of $M$ over $B$ is called a \emph{Killing submersion} if its fibers are the trajectories of a complete unit Killing vector field, $\xi$. Fibers of Killing submersions are geodesics in $M$ and form a foliation called the \emph{vertical foliation}. Indeed, the Killing vector field $\xi$ is sometimes referred to as the \emph{vertical Killing} vector field.

Since $\xi$ is a vertical unit Killing vector field, then it is clear that for any vector field $Z$ on $M$, there exists a function $\tau_Z$ such that $$\bar{\nabla}_Z\xi=\tau_Z Z\wedge \xi\,,$$ where $\wedge$ denotes the usual vector product in 3-dimensional manifolds. Indeed, as proved in \cite{M}, one can see that $\tau_Z$ does not depend on the vector field $Z$, so we have a function $\tau\in\mathcal{C}^\infty (M)$, the \emph{bundle curvature}. Obviously, the bundle curvature is constant along fibers and, consequently, it can be seen as a function on the base surface $\tau\in\mathcal{C}^\infty(B)$ (denoted again by $\tau$).

From now on, the 3-manifold $M$ is going to be called the \emph{total space} of the Killing submersion and the surface $B$, the \emph{base surface}. It turns out that most of the geometry of a Killing submersion is encoded in a pair of functions on the base surface, namely, the \emph{Gaussian curvature} of the base surface $K_B$ and the \emph{bundle curvature} $\tau$. Therefore, it is usual to denote the total space of a Killing submersion by $M\equiv M(K_B,\tau)$.

A natural question that arises here concerns the existence of Killing submersions over a given surface $B$ (with Gaussian curvature $K_B$) for a prescribed bundle curvature $\tau\in\mathcal{C}^\infty(B)$. For arbitrary Riemannian surfaces $B$, existence has been proved in \cite{BGP}. In particular, if the surface $B$ happens to be simply connected then this result was already proven in \cite{M}. In the same paper, uniqueness (up to isomorphisms) is also guaranteed under the assumption that the total space $M$ is also simply connected.

Let $\gamma$ be an immersed curve in $B$, then the pre-image of $\gamma$ via the Killing submersion $\pi$, $S_\gamma:=\pi^{-1}(\gamma)$ is a surface isometrically immersed in $M$. Clearly, $S_\gamma$ is invariant under the one-parameter group of isometries associated with the vertical Killing vector field, $\xi$, $\mathcal{G}=\{\psi_t\,,\,t\in\mathbb{R}\}$. Indeed, any $\mathcal{G}$-invariant surface in $M$, $S$, is obtained by this construction for some curve $\gamma$ of $B$. These surfaces $S_\gamma$ are called \emph{vertical tubes} (or, \emph{vertical cylinders}) based on the curve $\gamma$.

Assume that $\gamma$ is parametrized by its arc-length $s$. Then, any horizontal lift of $\gamma$, $\bar{\gamma}$, is also arc-length parametrized. Now, using as coordinate curves the horizontal lifts of $\gamma$ and the fibers of the Killing submersion, vertical tubes $S_\gamma$ can be parametrized by
$$x(s,t)=\psi_t\left(\gamma(s)\right).$$
Note that $S_\gamma$ is embedded if $\gamma$ is a simple curve, and it is a torus when $\gamma$ is closed and $\mathcal{G}\cong \mathbb{S}^1$ is a circle group. Fibers can be considered to have finite length since, if necessary, a suitable quotient under a vertical translation can always be taken in order to get a circle bundle over $M$, hence, the second condition ($\mathcal{G}\cong\mathbb{S}^1$) can always be assumed. However, even in this case, horizontal lifts of $\gamma$, $\bar{\gamma}$, may not be closed due to the non-trivial holonomy (for an example, see \cite{ABG}).

Finally, as a consequence of the parametrization of these vertical tubes, we have that $S_\gamma$ are always flat ($K_{S_\gamma}\equiv 0$). Moreover, the mean curvature function of these surfaces, $H$, is closely related to the curvature function of the cross sections as the following formula shows (for details, see \cite{B})
\begin{equation}\label{k=2H}
H=\frac{1}{2}\left(\kappa\circ\pi\right),
\end{equation}
where $\kappa$ denotes the geodesic curvature of $\gamma$ in the base surface $B$.

Then, in this setting we can prove the following characterization for critical vertical tori.

\begin{theorem}\label{characterization} Let $\pi:M\rightarrow B$ be a Killing submersion with compact fibers and consider the mean curvature energy functional $$\mathcal{F}(S)=\int_S P\left(2 H\right)\,dA$$ acting on the space of surface immersions in the total space $M$. If $\gamma$ is a closed curve in $B$, then its vertical torus $S_\gamma=\pi^{-1}\left(\gamma\right)$ is a critical point of $\mathcal{F}(S)$ if and only if $\gamma$ is a critical curve of the following curvature energy $$\mathbf{\Theta}(\gamma)=\int_\gamma P\left(\kappa\right)\,ds\,.$$
\end{theorem}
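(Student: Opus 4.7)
The plan is to substitute the geometry of the vertical tube $S_\gamma$ into the Euler-Lagrange equation \eqref{EL} of $\mathcal{F}(S)$ and verify that the resulting equation along $S_\gamma$ coincides with the Euler-Lagrange equation of the curve functional $\mathbf{\Theta}(\gamma)$. Since $\gamma$ is closed in $B$ and the fibers are compact, $S_\gamma$ is a torus and no boundary terms obstruct the equivalence; hence the statement reduces to the pointwise agreement of the two Euler-Lagrange equations.

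I begin with the intrinsic reduction. In the parametrization $x(s,t)=\psi_t(\gamma(s))$, the frame $\{T,\xi\}=\{x_s,x_t\}$ is orthonormal since $\psi_t$ is an isometry fixing $\xi$ and horizontal and vertical directions are orthogonal; hence the induced metric on $S_\gamma$ is the flat product $ds^2+dt^2$, $K_{S_\gamma}\equiv 0$, and the surface Laplacian is $\partial_s^2+\partial_t^2$. From \eqref{k=2H}, $2H=\kappa\circ\pi$, so $P(2H)$ and its $H$-derivative $P'$ are functions of $s$ alone and $\Delta P'=(P')''$. For the extrinsic reduction I compute $2R+\Ricci(\eta,\eta)$ using the standard Killing-submersion curvature identities: a horizontal 2-plane has sectional curvature $K_B-3\tau^2$ and a 2-plane containing $\xi$ has sectional curvature $\tau^2$. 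The tangent plane of $S_\gamma$ is $T\wedge\xi$, one horizontal and one vertical direction, so $R=\tau^2$. Since $\eta$ is horizontal, $\Ricci(\eta,\eta)$ is the sum of the sectional curvatures of $\eta\wedge T$ and $\eta\wedge\xi$, giving $(K_B-3\tau^2)+\tau^2=K_B-2\tau^2$. The $\tau^2$ contributions cancel and $2R+\Ricci(\eta,\eta)=K_B\circ\pi$.

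Substituting into \eqref{EL} together with $4H^2=\kappa^2$ and $K_{S_\gamma}=0$, the Euler-Lagrange equation for $\mathcal{F}$ along $S_\gamma$ becomes a pure ODE in $s$; a direct first-variation computation of $\mathbf{\Theta}(\gamma)=\int_\gamma P(\kappa)\,ds$ for normal variations of $\gamma$ in $B$ (using $\delta\kappa=\varphi''+(\kappa^2+K_B)\varphi$ and $\delta(ds)=-\kappa\varphi\,ds$) yields the same ODE after the chain-rule identification $P'=2\,dP/d\kappa$ dictated by the two differing conventions. The two criticality conditions therefore coincide, proving both directions of the ``if and only if''. The delicate step is the $\tau$-cancellation in $2R+\Ricci(\eta,\eta)$: both $R$ and $\Ricci(\eta,\eta)$ separately depend on the bundle curvature, and only their specific combination is $\tau$-free, which is precisely what allows the restricted Euler-Lagrange equation to reduce to the Euler-Lagrange equation on the base surface of Gaussian curvature $K_B$. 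A conceptual alternative that bypasses the curvature bookkeeping is Palais's principle of symmetric criticality: since $\mathcal{F}$ is $\mathcal{G}$-invariant, $S_\gamma$ is $\mathcal{G}$-invariant, and the product structure $ds^2+dt^2$ yields $\mathcal{F}(S_\gamma)=L_{\mathcal{G}}\cdot\mathbf{\Theta}(\gamma)$ with $L_{\mathcal{G}}$ the constant fiber length, reducing criticality of $S_\gamma$ under all variations to criticality of $\gamma$ in $B$.
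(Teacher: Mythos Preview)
Your argument is correct, but the emphasis is inverted relative to the paper. The paper's proof of this theorem is precisely what you relegate to a ``conceptual alternative'' at the very end: it invokes Palais's Principle of Symmetric Criticality directly, identifies the space of $\mathcal{G}$-invariant immersions with vertical tori over closed curves, and then uses $2H=\kappa\circ\pi$ and the flat product metric to reduce $\mathcal{F}|_\Sigma$ to a constant multiple of $\mathbf{\Theta}$. Your primary route---substituting the tube geometry into \eqref{EL}, computing $2R+\Ricci(\eta,\eta)=K_B$ via the sectional-curvature identities of the submersion, and matching the resulting ODE with the Euler--Lagrange equation of $\mathbf{\Theta}$---is exactly what the paper does \emph{after} the theorem, as a separate Proposition, to obtain the pointwise Euler--Lagrange correspondence for (not necessarily closed) curves and tubes. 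So both approaches appear in the paper, but with the roles reversed: Palais gives the theorem, and the explicit $\tau$-cancellation is recorded as a complementary result. The Palais route is shorter and avoids the curvature bookkeeping you flag as ``delicate''; your direct route has the advantage of making the $\tau$-independence transparent and of yielding the stronger local (non-closed) statement as well.
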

\begin{proof} Since $\gamma$ is closed and the fibers of the Killing submersion are compact, $S_\gamma$ is a vertical torus based on $\gamma$. Now, we extend the action of $\mathcal{G}\cong\mathbb{S}^1$ on $M\equiv M(K_B,\tau)$ to $I(S,M)$ in a natural way,
$$\mathcal{F}(\phi)=\mathcal{F}(\psi_t\circ\phi)$$
for all $t\in\mathbb{R}$ and $\phi\in I(S,M)$.\\
We also identify the space of the $\mathbb{S}^1$-invariant immersions, $\Sigma$, with the space of vertical tori based on closed curves in $B$, i.e.
$$\Sigma\equiv\{S_\gamma=\pi^{-1}(\gamma)\,,\, \gamma \text{ is a closed curve in } M\}\,.$$
Then, we are in conditions to apply the Symmetric Criticality Principle of Palais, \cite{P}, to reduce symmetry. That is, we have that $S_\gamma$ is a critical point of $\mathcal{F}(S)$ acting on the space of surface immersions in the total space $M$ if and only if it is a critical point of $\mathcal{F}(S)$ restricted to $\Sigma$, $\mathcal{F}_{\lvert_\Sigma}(S)$.\\
Finally, using \eqref{k=2H} we conclude that this happens, precisely, when $\gamma$ is a critical curve of $\mathbf{\Theta}$. \end{proof}

To end this section, we will recall a few basic concepts about variational problems for curves. We will begin with the Euler-Lagrange equation of 
\begin{equation}\label{energyB}
\mathbf{\Theta}(\gamma)=\int_\gamma P\left(\kappa\right)\,ds
\end{equation}
in any surface $B$ with Gaussian curvature $K_B$. This equation is given by
\begin{equation}
\dot{P}_{ss}+\dot{P}\left(\kappa^2+K_B\right)-\kappa P=0\,, \label{ELB}
\end{equation}
where the upper dot denotes the derivative with respect to $\kappa$ and $s$ is the arc-length parameter along $\gamma$.

\begin{remark} Note that critical curves of $\mathbf{\Theta}(\gamma)=\int_\gamma P(\kappa)\,ds$ and $\widetilde{\mathbf{\Theta}}(\gamma)=\int_\gamma \mu P(\kappa)\,ds$ for any non-zero constant $\mu$ are the same, since the corresponding Euler-Lagrange equations coincide.
\end{remark}

Let $\gamma$ be any curve in a surface $B$ ($\gamma$ not necessarily closed). Choose any function $\tau\in\mathcal{C}^\infty(B)$ and construct the Killing submersion $\pi:M(K_B,\tau)\rightarrow B$ whose existence is guaranteed as explained above. Then, on the vertical tube based on $\gamma$, $S_\gamma$, the following formula holds, $$2R+\Ricci\left(\eta,\eta\right)=K_B\,,$$ where $R$ is the extrinsic Gaussian curvature of $M(K_B,\tau)$ and $\eta$ denotes the unit normal along $S_\gamma$. 

Hence, combining \eqref{k=2H} with $K_{S_\gamma}=0$ (since $S_\gamma$ is flat) we get that the Euler-Lagrange equation for $\mathbf{\Theta}(\gamma)$, \eqref{ELB}, becomes \eqref{EL} and, therefore, we conclude with the following result.

\begin{proposition} Let $\gamma$ be a curve in any surface $B$ and take any function $\tau\in\mathcal{C}^\infty(B)$. Then, the vertical tube based on $\gamma$, $S_\gamma$, verifies the Euler-Lagrange equation of $\mathcal{F}(S)$, \eqref{EL}, in $M(K_B,\tau)$ if and only if $\gamma$ verifies the Euler-Lagrange equation of $\mathbf{\Theta}(\gamma)$, \eqref{ELB}, in $B$.
\end{proposition}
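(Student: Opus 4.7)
The plan is to show that \eqref{EL} specialized to the vertical tube $S_\gamma$ coincides identically with \eqref{ELB} for $\gamma$, so that the ``if and only if'' becomes tautological. The three geometric ingredients needed are already isolated in the text preceding the statement: firstly, $H=\tfrac{1}{2}(\kappa\circ\pi)$ on $S_\gamma$ by \eqref{k=2H}; secondly, $K_{S_\gamma}\equiv 0$ because vertical tubes are flat; and thirdly, $2R+\Ricci(\eta,\eta)=K_B$ along $S_\gamma$. With these in hand the substitution is essentially mechanical once the Laplacian term is under control.

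The only point requiring a genuine geometric argument, and the main obstacle, is identifying the intrinsic Laplacian $\Delta$ on $S_\gamma$ when it acts on functions of $\kappa$. For this I would use the parametrization $x(s,t)=\psi_t(\bar{\gamma}(s))$, where $\bar{\gamma}$ is a horizontal arc-length lift of $\gamma$. Since $\xi$ is a unit Killing vector field, its flow $\psi_t$ is an isometry preserving the horizontal distribution; hence $\partial_t x=\xi$ is unit, $\partial_s x=d\psi_t(\bar{\gamma}')$ is unit and horizontal, and the two are orthogonal. So the induced metric on $S_\gamma$ is the flat $ds^2+dt^2$ and the scalar Laplacian is $\Delta=\partial_s^2+\partial_t^2$. (As a small sanity check, one also notes that because $\eta$ is a unit normal to a hypersurface, the normal connection is trivial and so the ``normal Laplacian'' $\Delta P'$ appearing in \eqref{EL} is indeed the intrinsic scalar Laplacian acting on $P'$.) Since $\kappa\circ\pi$ is constant along the fibers of $\pi$, both $H$ and $P'$ depend only on $s$, and therefore $\Delta P'=(P')_{ss}$.

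What remains is to reconcile the two notational conventions for the derivative of $P$: in \eqref{EL} the prime denotes $dP/dH$, while in \eqref{ELB} the dot denotes $dP/d\kappa$. The relation $2H=\kappa$ and the chain rule give $P'=2\dot{P}$. Substituting this, together with the three facts above, into \eqref{EL} produces
$$2\dot{P}_{ss}+2\dot{P}(\kappa^2+K_B)-2\kappa P=0,$$
which is exactly twice \eqref{ELB}. Both directions of the equivalence follow at once from this equality of expressions: if $\gamma$ solves \eqref{ELB} then $S_\gamma$ solves \eqref{EL}, and conversely the PDE \eqref{EL} on $S_\gamma$ reduces to a $t$-independent ODE in $s$, namely \eqref{ELB} on $\gamma$.
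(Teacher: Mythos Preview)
Your proposal is correct and follows exactly the approach the paper takes: the paper's ``proof'' is contained in the paragraph immediately preceding the proposition, where it collects the three identities $2H=\kappa\circ\pi$, $K_{S_\gamma}=0$, and $2R+\Ricci(\eta,\eta)=K_B$, and asserts that substituting them into \eqref{EL} yields \eqref{ELB}. You have supplied the details the paper leaves implicit---in particular the computation of $\Delta P'$ via the flat coordinate metric $ds^2+dt^2$ and the chain-rule identification $P'=2\dot P$---but the route is the same.
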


In particular, if $\gamma$ happens to be closed in $B$ we can always consider a Killing submersion with compact fibers. Then, by a simple analysis of boundary terms, we recover Theorem \ref{characterization}.

\section{Examples in Homogeneous 3-Spaces}\label{Sec4}

Among Killing submersions, the most remarkable family are the homogenous 3-manifolds. As proved by Cartan, \cite{C2}, simply connected homogeneous Riemannian 3-manifolds with isometry group of dimension $6$ or $4$ can be represented by a 2-parameter family $\mathbb{E}(a,b)$, with $a$, $b\in\mathbb{R}$, but for the hyperbolic 3-space $\mathbb{H}^3(-1)$. These $\mathbb{E}(a,b)$ spaces determine Killing submersions over the simply connected constant Gaussian curvature surfaces $B\equiv B(\rho)$ where $K_B=\rho\in\mathbb{R}$. A local description of these $\mathbb{E}(a,b)$ spaces can be given by using the so-called \emph{Bianchi-Cartan-Vranceanu spaces} (BCV spaces, for short). 

The BCV spaces are described by the following 2-parameter family of Riemannian metrics
$$g_{a,b}=\frac{dx^2+dy^2}{\left[1+a\left(x^2+y^2\right)\right]^2}+\left(dz+b\,\frac{y dx-x dy}{2\left[1+a\left(x^2+y^2\right)\right]}\right)^2,\quad a,b\in\mathbb{R}$$
defined on $M=\{\left(x,y,z\right)\in\mathbb{R}\,,\, 1+a\left(x^2+y^2\right)>0\}$ (see \cite{C2} and \cite{V}). Regarded as Killing submersions, a simple computation shows that the Gaussian curvature of the base surface $B$ is $K_B=\rho=4a$ and that the bundle curvature is given by $\tau=b/2$. Thus, BCV spaces can be seen as the canonical models of Killing submersions with constant bundle and Gaussian curvatures (see, for instance, \cite{M}).

Among its simply connected members, these spaces include the \emph{Berger spheres}, the \emph{Riemannian products} $\mathbb{S}^2\times\mathbb{R}$ and $\mathbb{H}^2\times\mathbb{R}$, the \emph{Heisenberg group} $\mathbb{H}_3$, and the universal covering of the special linear group, $SL(2,\mathbb{R})$. However, this family also includes quotients of these spaces by suitable isometry subgroups, namely, 
\begin{itemize}
\item \emph{Lens spaces} $L_n=\mathbb{S}^3/\mathbb{Z}_n$ for $n\geq 2$ (including the real projective space $\mathbb{R}\mathbb{P}^3=L_2$) with the corresponding induced Berger metrics;
\item Heisenberg bundles (including those over flat tori); and
\item The projective special linear group $PSL(2,\mathbb{R})=\widetilde{PSL}(2,\mathbb{R})/\mathbb{Z}_2$ and other quotients of $\widetilde{PSL}(2,\mathbb{R})$.
\end{itemize}

From this and Theorem \ref{characterization}, it makes sense to study closed critical curves of $\mathbf{\Theta}(\gamma)$, \eqref{energyB}, in $B(\rho)$. In this setting, whenever $\dot{P}_s\neq 0$, the Euler-Lagrange equation, \eqref{ELB}, can be integrated once by multiplying by $\dot{P}_s$. Indeed, we have that
\begin{equation}
\dot{P}_s^2+\left(\kappa\dot{P}-P\right)^2+\rho\dot{P}^2=d \label{FIB}
\end{equation}
for a constant of integration $d$ represents a first integral of \eqref{ELB}. However, the parameter $d$ here is not completely free. In fact, if $\rho\geq 0$ then $d>0$. As will be clear in next section, throughout this paper we are just interested in the case $d>0$ for any value of $\rho$. Hence, for $d>0$, as we will see, most of the closed cases happen in the sphere, i.e. $B(\rho)=\mathbb{S}^2(\rho)$.

We now consider some interesting particular cases:

\subsection{Bending Energy}\label{benen}

According to the classical model of Euler-Bernoulli, the \emph{bending energy} functional of a curve is defined by $\mathbf{\Theta}(\gamma)$, \eqref{energyB}, for $P(\kappa)=\kappa^2+\lambda$ where $\lambda\in\mathbb{R}$. The constant $\lambda$ may be understood as a Lagrange multiplier constraining the length of the curves. Then, for any $\lambda$, critical curves of this energy are called \emph{(constrained) elastic curves}. 

However, one can consider no constraint on the length of the curves, which means $\lambda=0$. Thus, we consider the following \emph{bending energy}
\begin{equation}
\mathcal{E}(\gamma)=\int_\gamma \kappa^2\,ds\, .\label{BE}
\end{equation}
Critical curves of $\mathcal{E}(\gamma)$, \eqref{BE}, are called \emph{(free) elastic curves}. Observe that this bending energy is a particular case of $\mathbf{\Theta}(\gamma)$, \eqref{energyB}, corresponding with the choice $P(\kappa)=\kappa^2$. Free elastic curves in 2-space forms have been widely studied in the literature (see, for instance, \cite{LS}). In particular, the only closed critical curves appear in the sphere $\mathbb{S}^2(\rho)$. 

It turns out that there exists a beautiful link between elastica and Willmore surfaces. Indeed, this link was proved in \cite{Pi} using a symmetry reduction procedure as in Theorem \ref{characterization} for this particular case. Moreover, in the same paper, starting from closed constrained elastic curves in $\mathbb{S}^2(4)$, infinitely many closed Willmore surfaces in $\mathbb{S}^3(1)$ were found. Note that in this case, the Killing submersion is nothing but the standard Hopf mapping. 

\subsection{Extended Blaschke's Energy}\label{extblas}

In 1930 Blaschke studied the variational problem given by $\mathbf{\Theta}(\gamma)$, \eqref{energyB}, with $P(\kappa)=\sqrt{\kappa}$, but for variations of curves restricted to lie in the Euclidean 3-space, $\mathbb{R}^3$, \cite{Bl}. Much later, an extension of this functional was introduced in \cite{AGP2}, in order to study invariant constant mean curvature surfaces. This extension was done in two directions; first, both Riemannian and Lorentzian 3-space forms were considered as ambient spaces; and, second, an energy index was included in the curvature energy functional. 

Therefore, following \cite{AGP2}, the \emph{extended Blaschke's energy} is defined by
\begin{equation}
\mathcal{B}(\gamma)=\int_\gamma \sqrt{\kappa-\lambda}\,ds \label{EB}
\end{equation}
where $\lambda\in\mathbb{R}$ is a fixed constant. Note that this energy coincides with $\mathbf{\Theta}(\gamma)$, \eqref{energyB}, for $P(\kappa)=\sqrt{\kappa-\lambda}$.

Critical curves of $\mathcal{B}(\gamma)$, \eqref{EB}, in $B(\rho)$ were completely described in terms of their curvature in Corollary 3.3 of \cite{AGP2}. Here, we are mainly interested in closed critical curves with non-constant curvature. Therefore, since closed curves have periodic curvature, we restrict to this case and summarize the result in the following proposition.

\begin{proposition} Let $\gamma\subset B(\rho)$ be an extremal curve for $\mathcal{B}(\gamma)$, \eqref{EB}, with non-constant periodic curvature $\kappa(s)$. Then $\kappa(s)=\kappa_d(s)$ depends on a parameter $d$ and we have 
$$\kappa_d(s)=\frac{\rho+\lambda^2}{2d+\lambda-\sqrt{4d^2+4\lambda d-\rho}\sin\left(2\sqrt{\rho+\lambda^2}s\right)}+\lambda$$ 
for $d>\left(-\lambda+\sqrt{\rho+\lambda^2}\right)/2$ and $\rho+\lambda^2>0$.
\end{proposition}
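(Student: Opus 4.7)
The plan is to specialize the first integral \eqref{FIB} of the Euler-Lagrange equation to $P(\kappa)=\sqrt{\kappa-\lambda}$, reduce the resulting relation to a first-order ODE for $\kappa(s)$, and then linearize it via a reciprocal substitution.

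First I would compute the relevant derivatives of $P$, namely $\dot{P}=\tfrac{1}{2}(\kappa-\lambda)^{-1/2}$, $\kappa\dot{P}-P=(2\lambda-\kappa)/(2\sqrt{\kappa-\lambda})$, and $\dot{P}_s=-\kappa'/[4(\kappa-\lambda)^{3/2}]$, and substitute these into \eqref{FIB} with $K_B=\rho$. Clearing the common factor $4(\kappa-\lambda)$ and setting $u:=\kappa-\lambda$, the first integral collapses to
\begin{equation*}
(u')^2 \;=\; 4u^2\Bigl[-u^2 + 2(2d+\lambda)u - (\lambda^2+\rho)\Bigr].
\end{equation*}

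The key observation is that the right-hand side carries an explicit factor of $u^2$. Under the reciprocal substitution $v=1/u$, so that $(u')^2=(v')^2/v^4$, this factor cancels and the equation becomes constant-coefficient,
\begin{equation*}
(v')^2 \;=\; -4(\lambda^2+\rho)v^2 + 8(2d+\lambda)v - 4,
\end{equation*}
which is the first integral of a simple harmonic oscillator for $v$. Completing the square identifies the frequency $2\sqrt{\rho+\lambda^2}$ (real precisely when $\rho+\lambda^2>0$), the center $(2d+\lambda)/(\rho+\lambda^2)$, and the amplitude squared $(4d^2+4\lambda d-\rho)/(\rho+\lambda^2)^2$. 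A non-constant periodic solution therefore exists exactly when $4d^2+4\lambda d-\rho>0$; solving this quadratic inequality in $d$ and taking the branch on which $v>0$ (equivalently $\kappa>\lambda$, which is needed for $\sqrt{\kappa-\lambda}$ to lie in the domain of $P$) yields the stated range $d>(-\lambda+\sqrt{\rho+\lambda^2})/2$.

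Choosing the phase so that the sine vanishes at $s=0$, the oscillator integrates to
\begin{equation*}
v(s) \;=\; \frac{2d+\lambda - \sqrt{4d^2+4\lambda d-\rho}\,\sin\!\bigl(2\sqrt{\rho+\lambda^2}\,s\bigr)}{\lambda^2+\rho},
\end{equation*}
and inverting $\kappa=\lambda+1/v$ produces exactly the formula in the statement. The work is essentially algebraic; the only real obstacle is the bookkeeping needed to check that the admissible range of $d$ simultaneously ensures a real frequency, a real amplitude, and a strictly positive denominator for $\kappa_d(s)$. This last point reduces to verifying $(2d+\lambda)^2>4d^2+4\lambda d-\rho$, which is automatic from $\rho+\lambda^2>0$, so the two conditions in the statement are both necessary and sufficient.
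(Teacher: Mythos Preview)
Your argument is correct. The paper does not supply its own proof of this proposition; it simply quotes the result from Corollary~3.3 of \cite{AGP2} and restricts to the periodic case. Your derivation via the first integral \eqref{FIB}, the substitution $u=\kappa-\lambda$, and the linearizing change $v=1/u$ is the natural route and reproduces the stated formula exactly. The one place worth a sentence more of justification is the selection of the branch $d>(-\lambda+\sqrt{\rho+\lambda^2})/2$: the other root of $4d^2+4\lambda d-\rho=0$ gives $2d+\lambda<0$, hence a negative center for $v$, which forces $\kappa<\lambda$ and leaves the domain of $P$; you allude to this but it could be stated explicitly.
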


In the case of $\mathbb{S}^2(\rho)$, existence of closed critical curves for any value of $\lambda$ was proved in \cite{AGP3}. In fact, those are the only closed curves for $d>0$.

Here, we would like to specialize the result of Theorem \ref{characterization} to critical curves of the extended Blaschke's energy. For this purpose we prove the existence of local variations such that all immersions in those variations verify $H>\mu$ for a fixed constant $\mu\in\mathbb{R}$, provided that the initial immersion verifies it.

\begin{proposition}\label{sub}
Let $\Phi:S\times\left(-\varepsilon,\varepsilon\right)\rightarrow M(\rho,\tau)$ be a variation of $\phi\in I(S,M(\rho,\tau))$ and assume that $H_\phi>\mu$ for a fixed constant $\mu\in\mathbb{R}$. Then there exists a positive $\hat{\varepsilon}\leq \varepsilon$ such that any immersion $\phi_\varsigma$ in the restriction of $\Phi$, $\hat{\Phi}:S\times\left(-\hat{\varepsilon},\hat{\varepsilon}\right)\rightarrow M(\rho,\tau)$, satisifies $H_{\phi_\varsigma}>\mu$.
\end{proposition}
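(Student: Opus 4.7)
The plan is to reduce the statement to the classical tube lemma from point-set topology, using the smooth dependence of the mean curvature of $\phi_\varsigma$ on the parameter $\varsigma$. Since Proposition \ref{sub} is invoked for the vertical tori of Theorem \ref{characterization}, I will work under the (implicit) assumption that $S$ is compact, which is essential for a uniform $\hat{\varepsilon}$ to exist.

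First I would observe that, because $\Phi$ is smooth on $S\times(-\varepsilon,\varepsilon)$, both the induced metric and the second fundamental form of $\phi_\varsigma$ depend smoothly on $\varsigma$; this is essentially what Lemma \ref{lem1} already quantifies, where the first-order variation of $\vec{H}$ is computed. Consequently, the evaluation map $\mathcal{H}:S\times(-\varepsilon,\varepsilon)\to\mathbb{R}$ given by $\mathcal{H}(p,\varsigma):=H_{\phi_\varsigma}(p)$ is continuous (in fact smooth) as a function of both variables.

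Next I would form the open set $U:=\{(p,\varsigma)\in S\times(-\varepsilon,\varepsilon):\mathcal{H}(p,\varsigma)>\mu\}$. By the continuity just noted this set is open in $S\times(-\varepsilon,\varepsilon)$, and by the hypothesis $H_\phi>\mu$ it contains the entire compact slice $S\times\{0\}$. Applying the tube lemma to this compact slice inside the open set $U$ yields some $\hat{\varepsilon}\in(0,\varepsilon]$ with $S\times(-\hat{\varepsilon},\hat{\varepsilon})\subseteq U$. This $\hat{\varepsilon}$ is precisely what the statement requires, since by construction every $\phi_\varsigma$ with $|\varsigma|<\hat{\varepsilon}$ satisfies $H_{\phi_\varsigma}(p)>\mu$ for all $p\in S$.

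The only obstacle worth mentioning is the justification of the continuity of $\mathcal{H}$: one must argue that the unit normal, the shape operator and the induced metric all depend smoothly on $\varsigma$ whenever $\Phi$ does, so that their combination $H_{\phi_\varsigma}$ inherits this dependence. This is routine once Lemma \ref{lem1} is in hand, but it is the one step that truly uses analysis rather than topology. Once this is granted, the remainder of the proof is a textbook application of compactness, and no finer estimate is required.
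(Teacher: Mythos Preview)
Your proof is correct and follows essentially the same strategy as the paper's: both arguments rest on the continuity of $(p,\varsigma)\mapsto H_{\phi_\varsigma}(p)$ together with the compactness of $S$. The only difference is packaging---you invoke the tube lemma by name, whereas the paper reproves it inline by passing to a finite open cover of $\phi(S)$ and taking $\hat{\varepsilon}$ to be the minimum of the local $\tilde{\varepsilon}$'s.
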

\begin{proof} Consider a family of open sets in $\Phi\left(S\times\left(-\varepsilon,\varepsilon\right)\right)\subset M(\rho,\tau)$ such that they cover $\phi(S)$. Then, in any of these open sets, smoothness of $\Phi$ and $H$ implies that for a sufficiently small value $\widetilde{\varepsilon}\leq\varepsilon$, all the immersions $\phi_\varsigma$ satisfy $H_{\phi_\varsigma}>\mu$.\\
Moreover, since $S$ is compact, so is $\phi(S)$ and we can take a finite subfamily of open sets covering it. We call $\hat{\varepsilon}=\min \widetilde{\varepsilon}$ (where this minimum is only taken among the $\widetilde{\varepsilon}$ in this finite subfamily) and define $\hat{\Phi}$ as the restriction of $\Phi$, $\hat{\Phi}:S\times\left(-\hat{\varepsilon},\hat{\varepsilon}\right)\rightarrow M(\rho,\tau)$. Then, it is clear that all immersions in $\hat{\Phi}$ satisfy $H>\mu$.\\
Notice that both variations have the same variation vector field. \end{proof}

Then, the following result as a consequence of Theorem \ref{characterization} makes sense.

\begin{corollary}\label{ilus} Let $\gamma$ be a closed critical curve of the extended Blaschke's energy, $\mathcal{B}(\gamma)$, in $\mathbb{S}^2(\rho)$. Consider any function $\tau\in\mathcal{C}^\infty\left(\mathbb{S}^2(\rho)\right)$, then the vertical torus $S_\gamma=\pi^{-1}(\gamma)$ is a critical point of 
$$\mathcal{F}(S)=\int_S\sqrt{H-\mu}\,dA$$
for $\mu=\lambda/2$ in the total space of the Killing submersion $\pi:M(\rho,\tau)\rightarrow\mathbb{S}^2(\rho)$.
\end{corollary}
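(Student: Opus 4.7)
The plan is to deduce the corollary directly from Theorem \ref{characterization}, with the only subtlety being that the integrand $\sqrt{H-\mu}$ is defined only on the open locus $\{H>\mu\}$; Proposition \ref{sub} is used precisely to legalize the application.

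First I would recast $\mathcal{F}(S)$ in the form \eqref{energy}. Setting $P(u):=\sqrt{u/2-\mu}$ yields $P(2H)=\sqrt{H-\mu}$, and the associated curve functional is
$$\int_\gamma P(\kappa)\,ds=\frac{1}{\sqrt{2}}\int_\gamma\sqrt{\kappa-\lambda}\,ds=\frac{1}{\sqrt{2}}\,\mathcal{B}(\gamma).$$
By the remark following \eqref{ELB}, the extremals of this rescaled curve functional coincide with those of $\mathcal{B}(\gamma)$, so at the formal level of Euler--Lagrange equations Theorem \ref{characterization} already produces exactly the equivalence claimed.

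The main obstacle is the domain issue. For Theorem \ref{characterization} to apply, first variations of $\mathcal{F}$ along admissible variations of $S_\gamma$ must be well-defined. Since $\gamma$ is critical for $\mathcal{B}(\gamma)$, its curvature satisfies $\kappa>\lambda$ (otherwise the integrand of $\mathcal{B}$ would not even be real), and hence the identity $H=(\kappa\circ\pi)/2$ on vertical tubes gives $H>\mu$ on the compact torus $S_\gamma$. Proposition \ref{sub}, applied with this $\mu=\lambda/2$, then lets one shrink any given variation $\Phi$ to $\hat\Phi:S\times(-\hat\varepsilon,\hat\varepsilon)\to M(\rho,\tau)$ along which every $\phi_\varsigma$ still satisfies $H_{\phi_\varsigma}>\mu$, with the same variation vector field as $\Phi$. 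Since criticality at $S_\gamma$ depends only on that vector field at $\varsigma=0$, the restriction is free of cost.

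With this preparation the symmetry-reduction argument of Theorem \ref{characterization} (Palais's Principle of Symmetric Criticality together with $H=(\kappa\circ\pi)/2$ on vertical tubes) goes through verbatim and yields the stated equivalence.
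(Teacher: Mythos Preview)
Your proposal is correct and follows exactly the route the paper intends: the corollary is stated there as an immediate consequence of Theorem \ref{characterization}, with Proposition \ref{sub} supplied just beforehand precisely to handle the domain restriction $H>\mu$ that you address. Your explicit identification $P(u)=\sqrt{u/2-\mu}$ and the observation that the resulting curve functional is a constant multiple of $\mathcal{B}(\gamma)$ (hence has the same extremals by the remark after \eqref{ELB}) simply make explicit what the paper leaves implicit.
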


\subsection{Total Curvature Type Energies}\label{totalcurv}

Due to the classical results of Whitney and Grauestein, the total curvature functional acting on closed curves in the Euclidean plane is a trivial variational problem. Moreover, it can be checked that arbitrary planar curves are critical for the clamped problem.

However, in the literature, many generalizations of the total curvature have been studied. Here, we are going to consider the total curvature type energy used in \cite{Pa} to characterize rotational surfaces of constant Gaussian curvature in Riemannian 3-space forms. This energy is given by the functional
\begin{equation}
\mathcal{T}(\gamma)=\int_\gamma \sqrt{\varepsilon\left(\kappa^2+\lambda\right)}\,ds\,, \label{TC}
\end{equation}
where $\lambda$ is any real constant and $\varepsilon$ denotes the sign of $\kappa^2+\lambda$.

For the total curvature type energies $\mathcal{T}(\gamma)$, \eqref{TC}, in $B(\rho)$, the curvatures of the critical curves were obtained in Proposition 4.1 of \cite{Pa}. As before, in order to get closed curves we need periodic curvature. Next, we are going to summarize this result.

\begin{proposition} Let $\gamma\subset B(\rho)$ be an extremal curve for $\mathcal{T}(\gamma)$, \eqref{TC}, with non-constant periodic curvature $\kappa(s)=\kappa_d(s)$. Then $d\neq \varepsilon\lambda$, $\lambda<\rho$ and 
$$\kappa^2_d(s)=\frac{\lambda\left(\varepsilon d-\lambda\right)\sin^2\left(\sqrt{\rho-\lambda}s\right)}{\rho-\lambda-\left(\varepsilon d-\lambda\right)\sin^2\left(\sqrt{\rho-\lambda}s\right)}\,.$$
\end{proposition}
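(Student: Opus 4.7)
The plan is to specialize the first integral \eqref{FIB} to the density $P(\kappa)=\sqrt{\varepsilon(\kappa^2+\lambda)}$, and show that the Euler--Lagrange equation \eqref{ELB} reduces to a linear harmonic oscillator for $\dot{P}$. From there, periodicity immediately forces the sign condition on $\rho-\lambda$, and integrating back to $\kappa$ produces the advertised closed-form expression.

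I would first compute the basic quantities attached to $P(\kappa)=\sqrt{\varepsilon(\kappa^2+\lambda)}$: one has $P^2=\varepsilon(\kappa^2+\lambda)$ and $\dot P=\varepsilon\kappa/P$, so that $\kappa P-\kappa\dot P \cdot P^2/(\varepsilon\kappa) = \varepsilon\lambda/P$ and, differentiating carefully, $\dot P_s=\lambda\kappa_s/P^{3}$. Substituting these into the Euler--Lagrange equation \eqref{ELB} and grouping the two terms $\dot P(\kappa^{2}+\rho)$ and $-\kappa P$ over the common denominator $P$, the numerator collapses to $\varepsilon\kappa(\rho-\lambda)$; using $\varepsilon\kappa/P=\dot P$ once more, the equation becomes
\begin{equation*}
\dot P_{ss}+(\rho-\lambda)\,\dot P=0.
\end{equation*}
This is the crucial simplification and the step I expect to carry most of the work: noticing that the non-linear ODE for $\kappa$ disguises a linear ODE for the auxiliary variable $\dot P$.

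At this point, periodicity of $\kappa$ (required for $\gamma$ to be closed, which is the case of interest) forces $\dot P$ to be periodic as well, and a linear ODE $\dot P_{ss}+\alpha\dot P=0$ has periodic non-trivial solutions only when $\alpha>0$; hence $\lambda<\rho$. After a translation of the arc-length parameter, the non-trivial periodic solution can be written as $\dot P(s)=c_{0}\sin(\sqrt{\rho-\lambda}\,s)$ for some constant $c_{0}$. To pin down $c_{0}$, I would plug $\dot P$ into the first integral \eqref{FIB}: using $\dot P^{2}=\kappa^{2}/P^{2}$ and $(\kappa\dot P-P)^{2}=\lambda^{2}/P^{2}$, the identity $\lambda\dot P^{2}+\lambda^{2}/P^{2}=\varepsilon\lambda$ reduces \eqref{FIB} to the cleaner conservation law
\begin{equation*}
\dot P_{s}^{2}+(\rho-\lambda)\,\dot P^{2}=d-\varepsilon\lambda,
\end{equation*}
which evaluates on $\dot P=c_{0}\sin(\sqrt{\rho-\lambda}\,s)$ to $c_{0}^{2}(\rho-\lambda)=d-\varepsilon\lambda$, so $c_{0}^{2}=(d-\varepsilon\lambda)/(\rho-\lambda)$.

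To finish, I would recover $\kappa^{2}$ algebraically: from $\dot P=\varepsilon\kappa/P$ and $P^{2}=\varepsilon(\kappa^{2}+\lambda)$ one obtains $\kappa^{2}=\varepsilon\lambda\,\dot P^{2}/(1-\varepsilon\dot P^{2})$. Substituting $\dot P^{2}=c_{0}^{2}\sin^{2}(\sqrt{\rho-\lambda}\,s)$ and the value of $c_{0}^{2}$ above, multiplying numerator and denominator by $\rho-\lambda$, and using $\varepsilon\lambda(d-\varepsilon\lambda)=\lambda(\varepsilon d-\lambda)$ together with $\varepsilon(d-\varepsilon\lambda)=\varepsilon d-\lambda$, the resulting expression collapses exactly to the formula stated. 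The condition $d\neq\varepsilon\lambda$ corresponds to $c_{0}\neq 0$; if $d=\varepsilon\lambda$ then $\dot P\equiv 0$, which by $\dot P=\varepsilon\kappa/P$ forces $\kappa\equiv 0$, contradicting the hypothesis of non-constant curvature. This closes the argument.
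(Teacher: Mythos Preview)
Your argument is correct. The paper itself does not prove this proposition; it merely summarizes Proposition~4.1 of \cite{Pa}, so there is no in-paper proof to compare against. Your approach---observing that the Euler--Lagrange equation \eqref{ELB} linearizes to $\dot P_{ss}+(\rho-\lambda)\dot P=0$ for this particular $P$, then using the first integral \eqref{FIB} to identify the amplitude---is the natural one and is almost certainly what is done in \cite{Pa} as well.

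One minor remark: the expression you wrote as ``$\kappa P-\kappa\dot P\cdot P^2/(\varepsilon\kappa)=\varepsilon\lambda/P$'' is garbled; what you need (and what you correctly use later) is simply $\kappa\dot P-P=-\varepsilon\lambda/P$, whence $(\kappa\dot P-P)^2=\lambda^2/P^2$. Also, implicitly you need $\lambda\neq 0$ for $\dot P_s=\lambda\kappa_s/P^{3}$ to be non-identically zero so that the first integral \eqref{FIB} applies; this is consistent with the stated formula, which forces $\kappa\equiv 0$ when $\lambda=0$.
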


By a simple analysis of the closure conditions of critical curves of $\mathcal{T}(\gamma)$, \eqref{TC}, with periodic curvature, in Proposition 5.3 of \cite{Pa} it was proved that the only closed critical curves appear in the 2-sphere $\mathbb{S}^2(\rho)$. Now, specializing the result of Theorem \ref{characterization} to this base surface, we have,

\begin{corollary} Let $\gamma$ be a closed critical curve of the total curvature type energy $\mathcal{T}(\gamma)$ in $\mathbb{S}^2(\rho)$. Consider any function $\tau\in\mathcal{C}^\infty\left(\mathbb{S}^2(\rho)\right)$, then the vertical torus $S_\gamma=\pi^{-1}(\gamma)$ is a critical point of 
$$\mathcal{F}(S)=\int_S\sqrt{\varepsilon\left(H^2+\mu\right)}\,dA$$
for $\mu=\lambda/4$ in the total space of the Killing submersion $\pi:M(\rho,\tau)\rightarrow\mathbb{S}^2(\rho)$.
\end{corollary}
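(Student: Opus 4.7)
The plan is to apply Theorem \ref{characterization} to the specific function $P$ that produces the total curvature type energy on curves, and then reconcile the resulting surface functional with $\mathcal{F}(S)$ via a rescaling of the argument of $P$ together with the scale-invariance of critical points. Concretely, I set $P(u):=\sqrt{\varepsilon(u^2+\lambda)}$, so that $\int_\gamma P(\kappa)\,ds=\mathcal{T}(\gamma)$. A direct computation gives $P(2H)=\sqrt{\varepsilon(4H^2+\lambda)}=2\sqrt{\varepsilon(H^2+\lambda/4)}$. Setting $\mu:=\lambda/4$, the surface functional that Theorem \ref{characterization} pairs with this $P$ is
$$\int_S P(2H)\,dA \;=\; 2\int_S\sqrt{\varepsilon(H^2+\mu)}\,dA \;=\; 2\,\mathcal{F}(S).$$
The remark following \eqref{ELB}, applied at the surface level (linearity of the first variation makes multiplicative constants irrelevant), implies that being critical for $\int_S P(2H)\,dA$ is the same as being critical for $\mathcal{F}(S)$.

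Next, I would verify that the sign $\varepsilon$ is consistent on the vertical tube: by \eqref{k=2H} one has $H=(\kappa\circ\pi)/2$, so $\varepsilon(H^2+\mu)=\tfrac{1}{4}\,\varepsilon(\kappa^2+\lambda)\geq 0$ on $S_\gamma$, and the integrand is therefore well-defined. Since $\gamma\subset\mathbb{S}^2(\rho)$ is closed and we may always arrange $\pi:M(\rho,\tau)\rightarrow\mathbb{S}^2(\rho)$ to have compact fibers (by a vertical quotient, as noted in Section \ref{Sec3}), the hypotheses of Theorem \ref{characterization} are satisfied. Consequently $S_\gamma=\pi^{-1}(\gamma)$ is critical for $\int_S P(2H)\,dA$ if and only if $\gamma$ is critical for $\int_\gamma P(\kappa)\,ds=\mathcal{T}(\gamma)$; by hypothesis, the latter holds.

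The main obstacle is the non-smooth integrand $\sqrt{\;\cdot\;}$, which is only differentiable where its argument is strictly positive, so a priori it is not clear that $\mathcal{F}$ is smooth along arbitrary variations of $S_\gamma$. I would overcome this exactly as was done for the extended Blaschke case in Corollary \ref{ilus}: an argument analogous to Proposition \ref{sub}, based on the compactness of $S_\gamma$ and the smoothness of the variation map $\Phi$, permits restricting any variation to a subinterval $(-\hat{\varepsilon},\hat{\varepsilon})$ on which every immersion $\phi_\varsigma$ satisfies $\varepsilon(H_{\phi_\varsigma}^2+\mu)>0$. On this restricted variation $\mathcal{F}$ is smooth, the Euler-Lagrange framework of Section \ref{Sec2} applies, and the symmetric criticality reduction inherent in the proof of Theorem \ref{characterization} goes through unaltered, yielding the stated criticality of $S_\gamma$.
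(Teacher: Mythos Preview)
Your proof is correct and follows essentially the same approach as the paper: the corollary is obtained by specializing Theorem \ref{characterization} to $P(u)=\sqrt{\varepsilon(u^2+\lambda)}$, and the paper likewise remarks (immediately after the statement) that when $\mu\leq 0$ an argument analogous to Proposition \ref{sub} guarantees subvariations on which the integrand remains positive. Your explicit handling of the factor $2$ via scale-invariance and the sign-consistency check for $\varepsilon$ are details the paper leaves implicit, but the strategy is identical.
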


The particular case where $\mu>0$ was first studied in \cite{AGM}. In this case, $H^2+\mu$ is always positive, and there is no need of restriction on the variations. Observe that in the other cases the previous result makes sense, since an argument similar to that of Proposition \ref{sub} can be used to prove the existence of subvariations verifying the desired property.

\subsection{Astigmatism Energy}\label{astigm}

For any real constant $\lambda\in\mathbb{R}$, called the energy index, we consider here the curvature energy functional given by
\begin{equation}
\mathcal{A}(\gamma)=\int_\gamma \kappa\, e^{\lambda/\kappa}\,ds\,. \label{A}
\end{equation}
If $\lambda=0$, the curvature energy functional $\mathcal{A}(\gamma)$, \eqref{A}, is just the total curvature functional, hence, we assume here that $\lambda\neq 0$. 

This energy was introduced in \cite{LP2} in order to study rotational surfaces of constant astigmatism in Riemannian 3-space forms. Moreover, by studying some geometric properties of critical curves and the closure condition, in the same paper it was proved that the only closed critical curves with non-constant curvature appear in $\mathbb{S}^2(\rho)$.

Then, in this setting, we have the following result due to Theorem \ref{characterization}.

\begin{corollary} Let $\gamma$ be a closed critical curve of the total curvature type energy $\mathcal{A}(\gamma)$ in $\mathbb{S}^2(\rho)$. Consider any function $\tau\in\mathcal{C}^\infty\left(\mathbb{S}^2(\rho)\right)$, then the vertical torus $S_\gamma=\pi^{-1}(\gamma)$ is a critical point of 
$$\mathcal{F}(S)=\int_S H e^{\mu/H}\,dA$$
for $\mu=\lambda/2$ in the total space of the Killing submersion $\pi:M(\rho,\tau)\rightarrow\mathbb{S}^2(\rho)$.
\end{corollary}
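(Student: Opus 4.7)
The plan is to specialize Theorem \ref{characterization} to a carefully chosen $P$ and then handle the fact that the resulting surface integrand is singular at $H=0$ by passing to suitable subvariations, exactly as in Proposition \ref{sub}.

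First, I would choose $P(u) = \tfrac{u}{2}\,e^{2\mu/u}$, defined for $u>0$. Under the identification $u = 2H$, this gives the surface density $P(2H) = H\,e^{\mu/H}$, and under $u=\kappa$, it gives the curve density $P(\kappa) = \tfrac{\kappa}{2}\,e^{2\mu/\kappa}$. Setting $\mu = \lambda/2$ turns the latter into $\tfrac{1}{2}\,\kappa\,e^{\lambda/\kappa}$, which differs from the astigmatism density $\kappa\,e^{\lambda/\kappa}$ only by the constant factor $1/2$. By the remark immediately after \eqref{ELB}, a non-zero multiplicative constant does not affect the Euler--Lagrange equation, so $\gamma$ is critical for $\mathbf{\Theta}(\gamma) = \int_\gamma P(\kappa)\,ds$ if and only if it is critical for $\mathcal{A}(\gamma)$.

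Next, I would address the domain of $\mathcal{F}$. The density $H\,e^{\mu/H}$ blows up at $H=0$, so $\mathcal{F}(S)$ is a priori only defined on immersions with $H$ of constant sign. Because $\gamma$ is a closed critical curve of $\mathcal{A}$, its curvature $\kappa$ must be bounded away from zero (otherwise $\kappa\,e^{\lambda/\kappa}$ itself is ill-defined); by \eqref{k=2H} this forces $H = \tfrac{1}{2}(\kappa\circ\pi) > 0$ on $S_\gamma$. An argument strictly parallel to that of Proposition \ref{sub}, with the inequality $H>0$ replacing $H>\mu$, then produces, for any variation of $S_\gamma$, a subvariation on which every immersion still satisfies $H>0$ and which has the same variation vector field as the original. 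On this admissible class of variations, $\mathcal{F}(S)$ is smoothly defined.

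Finally, Theorem \ref{characterization} applied on this admissible class identifies critical points of $\mathcal{F}(S)=\int_S H\,e^{\mu/H}\,dA$ with critical curves of $\int_\gamma P(\kappa)\,ds$, and the first step above identifies the latter with closed critical curves of $\mathcal{A}(\gamma)$. The main obstacle is not the Euler--Lagrange computation, which is a pure substitution, but the subvariation step: without it, $\mathcal{F}(S)$ is not well defined on a full neighbourhood of $S_\gamma$ in $I(S,M(\rho,\tau))$, so Theorem \ref{characterization} cannot be quoted verbatim. This is precisely why the analogue of Proposition \ref{sub} is needed and is flagged by the author just before the statement.
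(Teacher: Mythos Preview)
Your proposal is correct and matches the paper's approach: the corollary is stated there simply as a consequence of Theorem \ref{characterization}, and you have spelled out that specialization explicitly (choice of $P$, the harmless multiplicative constant via the remark after \eqref{ELB}, and the subvariation argument \`a la Proposition \ref{sub} to keep $H\neq 0$). One minor inaccuracy in your commentary: the author does not flag the subvariation issue immediately before \emph{this} corollary---that remark appears after the total curvature type corollary in Subsection \ref{totalcurv}---but your mathematical argument is unaffected.
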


\section{Binormal Evolution Tori}\label{Sec5}

Let us assume that $B(\rho)$ is isometrically immersed in a 3-dimensional Riemannian space form $M^3(\rho)$ as a totally geodesic surface. A planar curve (vanishing torsion) in $M^3(\rho)$ can be considered to lie in a totally geodesic surface, namely, $B(\rho)$. Conversely, a curve in $B(\rho)$ can be regarded as a planar curve in $M^3(\rho)$. Then, in this section we consider the evolution of a planar closed critical curve of $\mathbf{\Theta}(\gamma)$, \eqref{energyB}, under its associated binormal flow, \cite{AGP}.

We start by briefly recalling the notion of \emph{Killing vector fields along a curve}. A vector field $W$ along $\gamma$, which infinitesimally preserves unit speed parametrization is said to be a Killing vector field along $\gamma$ (in the sense of \cite{LS}) if $\gamma$ evolves in the direction of $W$ without changing shape, only position. It turns out that since $\gamma$ is critical for $\mathbf{\Theta}(\gamma)$, \eqref{energyB}, then the vector field defined on $\gamma$,
\begin{equation}
\mathcal{I}=\dot{P}(\kappa)B\label{I}
\end{equation}
where $B$ is the (constant) binormal vector field of $\gamma$, is a Killing vector field along it.

In space forms, Killing vector fields along curves can be uniquely extended to Killing vector fields defined in the whole space $M^3(\rho)$, \cite{LS}. Killing vector fields in $M^3(\rho)$ are the infinitesimal generators of isometries and, thus, evolution of curves under them generates invariant surfaces. In particular, the evolution under the extension of the Killing vector field along a curve in the direction of the binormal (binormal flow) generates an invariant surface of $M^3(\rho)$, usually called \emph{binormal evolution surface}, \cite{AGP}.

In our case, we are considering the binormal evolution surface swept out by evolving a critical curve $\gamma$ of $\mathbf{\Theta}(\gamma)$, \eqref{energyB}, under the extension of $\mathcal{I}$, \eqref{I}. It turns out that, since the initial curve of the evolution is planar, this binormal evolution surface is invariant under the action of a group of rotations, \cite{AGP}. 

\begin{remark} Note that if the initial curve has constant curvature, then the binormal evolution surface is just an isoparametric surface, i.e. in our context, it has constant principal curvatures. These surfaces in $M^3(\rho)$ were classified by Cartan in \cite{C}, proving that they are either totally umbilical or spherical cylinders. Hence, from now on, we are going to assume that the initial curve has non-constant curvature (or, equivalently, that the associated binormal evolution surface is non-isoparametric).
\end{remark}

Finally, for the non-constant curvature case, critical curves of $\mathbf{\Theta}(\gamma)$, \eqref{energyB}, are characterized by equation \eqref{FIB}. Following \cite{AGP}, we also have that the rotations associated with the binormal flow are of \emph{spherical type} (their orbits are Euclidean circles) if the constant of integration appearing in \eqref{FIB} is positive, i.e. $d>0$.

To sum up, let $\gamma$ be a critical curve of $\mathbf{\Theta}(\gamma)$, \eqref{energyB}, in $B(\rho)$ for $d>0$. We understand $\gamma$ as a planar curve in $M^3(\rho)$, as explained above. Then, we denote by $\widetilde{\mathcal{G}}=\{\widetilde{\psi}_t\,,\,t\in\mathbb{R}\}$ the group of (spherical) rotations associated with the extension of $\mathcal{I}$, \eqref{I}, and consider the binormal evolution surface of $M^3(\rho)$ (which is rotational) given by this binormal flow, $\widetilde{S}_\gamma:=\{\widetilde{\psi}_t\left(\gamma(s)\right)\}$. Therefore, if $\gamma$ is closed, then the binormal evolution surface $\widetilde{S}_\gamma$ generated as before is a torus in $M^3(\rho)$, called from now on \emph{binormal evolution torus}.

A nice geometric property of binormal evolution tori is the following.

\begin{theorem}\label{property} Let $\gamma$ be a closed critical curve of $\mathbf{\Theta}(\gamma)=\int_\gamma P(\kappa)ds$ in $B(\rho)$ for any $d>0$ and consider the binormal evolution torus $\widetilde{S}_\gamma$ generated by evolving $\gamma$ under the binormal flow associated to $\mathcal{I}=\dot{P}(\kappa)B$. Then $\widetilde{S}_\gamma$ verifies
\begin{equation}\label{WR}
\kappa_1=\kappa_2-\frac{P(\kappa)}{\dot{P}(\kappa)}
\end{equation}
between its principal curvatures, $\kappa_1$ and $\kappa_2$.
\end{theorem}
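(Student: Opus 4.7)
The plan is to exploit the warped-product structure that the Killing extension of $\mathcal{I}$ forces on $\widetilde{S}_\gamma$, identify one principal curvature directly from the Frenet frame of $\gamma$, and extract the other by combining the Gauss equation with the Euler-Lagrange equation \eqref{ELB}.

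I would begin by parametrising $\widetilde{S}_\gamma$ as $x(s,t)=\widetilde{\psi}_t(\gamma(s))$, where $\widetilde{\psi}_t$ is the one-parameter group generated by the Killing extension of $\mathcal{I}=\dot{P}(\kappa)B$. Along $\gamma$, the planarity in $M^3(\rho)$ (it lives in the totally geodesic plane $B(\rho)$) gives $\tau=0$, so $\nabla_TT=\kappa N$ and $\nabla_TB=0$. At $\gamma(s)$ the tangent plane of $\widetilde{S}_\gamma$ is spanned by $T$ and $B$, with $\partial_tx|_{t=0}=\widetilde{\mathcal{I}}$ of length $\dot{P}(\kappa(s))$. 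Since $\widetilde{\mathcal{I}}$ is Killing, $|\widetilde{\mathcal{I}}|$ is constant along each of its orbits, and since the $\widetilde{\psi}_t$ act isometrically the three inner products propagate off $\gamma$, yielding the warped-product first fundamental form
$$I=ds^2+\dot{P}(\kappa(s))^2\,dt^2.$$

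Next I would choose $\eta=N$ as unit normal along $\gamma$. Then $II(T,B)=\langle\nabla_TB,N\rangle=0$ and $II(T,T)=\kappa$, so $T$ and $B$ are orthogonal principal directions at $\gamma(s)$ and one principal curvature equals $\kappa$. For the other I would use the Gauss equation in the space form $M^3(\rho)$, $K^{\mathrm{int}}=\kappa_{\mathrm{merid}}\,\kappa_{\mathrm{par}}+\rho$, together with the standard identity $K^{\mathrm{int}}=-f''/f$ for the intrinsic Gaussian curvature of $ds^2+f(s)^2dt^2$ applied with $f=\dot{P}(\kappa(s))$, giving $K^{\mathrm{int}}=-\dot{P}_{ss}/\dot{P}$. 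Substituting the Euler-Lagrange equation \eqref{ELB} with $K_B=\rho$, which reads $\dot{P}_{ss}=\kappa P-\dot{P}(\kappa^2+\rho)$, one obtains
$$\kappa_{\mathrm{par}}=\frac{-\dot{P}_{ss}-\rho\dot{P}}{\kappa\,\dot{P}}=\frac{\kappa(\kappa\dot{P}-P)}{\kappa\,\dot{P}}=\kappa-\frac{P(\kappa)}{\dot{P}(\kappa)}.$$
Setting $\kappa_1:=\kappa_{\mathrm{par}}$ and $\kappa_2:=\kappa_{\mathrm{merid}}=\kappa$ gives precisely \eqref{WR}, and by rotational invariance the identity propagates to every point of $\widetilde{S}_\gamma$.

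The main technical point I expect to require care is verifying the warped-product form of the induced metric: one must use the Killing character of $\widetilde{\mathcal{I}}$, the planarity of $\gamma$ in $M^3(\rho)$ and the spherical-type rotations (the hypothesis $d>0$) to extend the three inner products from $\gamma$ to the entire surface. Once this warped form is in place the argument is a clean algebraic manipulation: the Gauss equation turns the intrinsic curvature into the product $\kappa_{\mathrm{merid}}\kappa_{\mathrm{par}}$, and \eqref{ELB} is precisely what is needed to rewrite that product as $\kappa(\kappa-P/\dot{P})$. In this sense \eqref{WR} is the extrinsic counterpart on $\widetilde{S}_\gamma$ of the criticality of $\gamma$ for $\mathbf{\Theta}$.
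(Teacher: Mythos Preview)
Your argument is correct and follows essentially the same route as the paper's: both parametrise $\widetilde S_\gamma$ via the rotation group, obtain the warped-product metric $ds^2+\dot P(\kappa)^2\,dt^2$, read off one principal curvature from the Frenet frame of $\gamma$, and recover the other by feeding the Euler--Lagrange equation \eqref{ELB} into a curvature identity (you via the intrinsic Gauss equation $K^{\mathrm{int}}=-\dot P_{ss}/\dot P$, the paper via the explicit formula $h_{22}=\frac{1}{\kappa}(\dot P_{ss}/\dot P+\rho)$ from \cite{AGP}, which is the same computation). The only discrepancy is your choice of normal $\eta=N$ rather than the paper's $\eta=-N$, together with the resulting swap of labels; since the subsequent sections repeatedly use the convention $\kappa_1=-\kappa$ (see e.g.\ the derivations following \eqref{LW}), you would need to reverse your signs and labelling for consistency with the rest of the paper.
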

\begin{proof} We know that our closed critical curve $\gamma$ evolves by (spherical) rotations of $M^3(\rho)$, sweeping out a binormal evolution torus $\widetilde{S}_\gamma$, which can be naturally parametrized by 
$$y(s,t)=\widetilde{\psi}_t\left(\gamma(s)\right).$$
Since $\widetilde{\psi}_t$ are rotations, the velocity of the binormal evolution is given by
$$G(s):=\langle y_t,y_t\rangle^{1/2}=\langle\mathcal{I},\mathcal{I}\rangle^{1/2}=\dot{P}\left(\kappa(s)\right).$$
Then, with respect to the above natural parametrization of $\widetilde{S}_\gamma$, the induced metric can be written as
\begin{equation}
g=ds^2+\dot{P}^2\left(\kappa(s)\right) dt^2\,. \label{g}
\end{equation}
The Christoffel symbols of the Levi-Civita connection of \eqref{g} can be expressed in terms of the metric coefficients $g_{ij}$. Then, since $\gamma_t(s)$ are congruent copies of $\gamma(s)$, they are Frenet curves (recall that the case with constant curvature gives rise to isoparametric surfaces and that we are not considering it, although it trivially verifies the condition \eqref{WR}). Denoting by $\{T(s,t),N(s,t),B(s,t)\}$ to their corresponding Frenet frame, we can choose the following adapted frame on $\widetilde{S}_\gamma$,
$$e_1=y_s=T\,\quad e_2=\frac{y_t}{\dot{P}}=B\,,\quad e_3=\eta=T\times B=-N\,,$$
$e_1$ and $e_2$ being tangent to $\widetilde{S}_\gamma$ whilst $e_3=\eta$ is a unit normal on it.\\ Now, after long straightforward computations using the Gauss and Weingarten formulae (for details see \cite{AGP}), the principal curvatures of $\widetilde{S}_\gamma$ can be computed obtaining that $\kappa_1=-\kappa(s)$ and $\kappa_2=h_{22}(s)$, where $h_{22}(s)$ is the second coefficient of the second fundamental form and it is given by (see \cite{AGP})
$$h_{22}(s)=\frac{1}{\kappa(s)}\left(\frac{\dot{P}_{ss}}{\dot{P}}+\rho\right).$$
Direct computations using the Euler-Lagrange equation \eqref{ELB} and above expressions for the principal curvatures lead, after some simplifications, to \eqref{WR}, proving the result. \end{proof}

Equation \eqref{WR} implies that the binormal evolution tori $\widetilde{S}_\gamma$ are \emph{Weingarten tori}. A surface in a Riemannian 3-space form $M^3(\rho)$ is said to be a \emph{Weingarten surface} if the two principal curvatures, $\kappa_1$ and $\kappa_2$, satisfy a non-trivial functional relation $W(\kappa_1,\kappa_2)=0$ along the surface. These surfaces were introduced by Weingarten in \cite{We} and its study occupies an important role in Classical Differential Geometry, as the works of Chern, Hartman, Hopf and Wintner (among others) in the decade of 1950, prove.

One of the simplest Weingarten relations is the so-called \emph{linear Weingarten relation}, although it is an affine relation between the principal curvatures, i.e.
\begin{equation}
\kappa_1=a\kappa_2+b\label{LW}
\end{equation}
where $a$, $b\in\mathbb{R}$. Here, we are going to avoid the trivial examples for the case $a=0$.

Consider first that $a\neq 1$. Then, following \cite{LP}, observe that for a binormal evolution torus if we plug \eqref{LW} into \eqref{WR}, we end up with (recall that the principal curvature $\kappa_1$ is given in terms of the curvature of the profile curve $\gamma$, $\kappa$, by $\kappa_1=-\kappa$)
$$\frac{\dot{P}(\kappa)}{P(\kappa)}=\frac{a}{\left(1-a\right)\kappa_1-b}=\frac{a}{a-1}\cdot\frac{1}{\kappa-b/(a-1)}$$
since $a\neq 1$. This differential equation can be integrated by simple quadratures obtaining (up to a multiplicative constant)
\begin{equation}\label{qelas}
P(\kappa)=\left(\kappa-\lambda\right)^q
\end{equation}
where $\lambda=b/(a-1)$ and $q=a/(a-1)$. Critical curves of the energy $\mathbf{\Theta}(\gamma)$, \eqref{energyB}, for the above function $P(\kappa)$, \eqref{qelas}, have been studied in the literature and they are called \emph{q-elastic curves}. In particular, among this family we highlight the case where $a=-1$ and $b\in\mathbb{R}$. In this case, $\widetilde{S}_\gamma$ has constant mean curvature $H=b/2=-\lambda$ and $\mathbf{\Theta}(\gamma)$, \eqref{energyB}, is the extended Blaschke's energy \eqref{EB} (see Subsection \ref{extblas}).

Let's now take $a=1$. Then, $\kappa_1=\kappa_2+b$, and therefore, for a binormal evolution torus it is clear that $P(\kappa)=-b\dot{P}(\kappa)$ which can be integrated obtaining (up to a multiplicative constant)
$$P(\kappa)=e^{\lambda\kappa}$$
for $\lambda=-b$. Observe that this case also falls inside the energy $\mathbf{\Theta}(\gamma)$, \eqref{energyB}. These critical curves and their associated binormal evolution have been used in \cite{LP1} to classify rotational surfaces of constant skew curvature in $3$-space forms.

Moreover, we can also consider non-linear Weingarten relations $W(\kappa_1,\kappa_2)=0$. For instance, surfaces of constant Gaussian curvature $K_o$ in Riemannian 3-space forms $M^3(\rho)$ can be described in terms of their principal curvatures, using the Gauss equation, by
$$\kappa_1\kappa_2=K_o-\rho\,.$$
Substituting it in \eqref{WR} we get (once more, take into account that for binormal evolution tori $\kappa_1=-\kappa$)
$$\dot{P}(\kappa)\left(\kappa^2+\rho-K_o\right)=\kappa P(\kappa)\,.$$
Now, integrating, we conclude with (up to a multiplicative constant)
$$P(\kappa)=\sqrt{\varepsilon\left(\kappa^2+\lambda\right)}$$
where $\lambda=\rho-K_o$ and $\varepsilon$ is the sign of $\kappa^2+\lambda$. Therefore, $\mathbf{\Theta}(\gamma)$, \eqref{energyB}, is nothing but the total curvature type energy \eqref{TC} (see Subsection \ref{totalcurv}).

Finally, for the sake of completeness, we also recover the astigmatism energy \eqref{A}. A surface of constant astigmatism is a surface where the principal curvatures verify
$$\frac{1}{\kappa_1}-\frac{1}{\kappa_2}=c$$
along the surface. Hence, they can be understood as Weingarten surfaces for the relation $\kappa_1=c\kappa_1\kappa_2+\kappa_2$. Then, using \eqref{WR}, we get the following differential equation (again, $\kappa_1=-\kappa$)
$$\kappa^2\dot{P}(\kappa)=\left(\kappa-\lambda\right)P(\kappa)$$
where $\lambda=1/c$ (the case $c=0$ is avoided since it corresponds with totally umbilical surfaces). Up to a multiplicative constant, we can integrate the above differential equation to obtain
$$P(\kappa)=\kappa e^{\lambda/\kappa}\,.$$
Then, $\mathbf{\Theta}(\gamma)$, \eqref{energyB}, is the astigmatism energy \eqref{A} (see Subsection \ref{astigm}) which determines the profile curves of binormal evolution tori with constant astigmatism.

\section{Correspondence Results}\label{Sec6}

In this section, we relate both constructions of tori in order to show a correspondence between Weingarten tori in Riemannian 3-space forms and closed critical points of mean curvature energies in total spaces of Killing submersions.

We begin by proving the converse of Theorem \ref{property}.

\begin{theorem}\label{converse} Let $S$ be a non-isoparametric rotational Weingarten torus in $M^3(\rho)$ verifying a relation $W(\kappa_1,\kappa_2)=0$ between its principal curvatures. Then, the profile curve of $S$, $\gamma$, is locally a planar closed critical curve for
$$\mathbf{\Theta}(\gamma)=\int_\gamma P(\kappa)\,ds$$
where $P(\kappa)$ is given implicitly by $W(-\kappa,P(\kappa)/\dot{P}(\kappa)-\kappa)=0$.
\end{theorem}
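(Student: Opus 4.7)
The plan is to reverse-engineer the argument of Theorem \ref{property}: parametrize the rotational torus $S$ in the warped-product form used there, compute both principal curvatures, show via Codazzi that the ``radius'' $G(s)$ of the rotation agrees with $\dot P(\kappa(s))$ up to a multiplicative constant, and then read off the Euler-Lagrange equation \eqref{ELB} for $\gamma$ from the Gauss equation.

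Since $S$ is a rotational torus in $M^3(\rho)$ with axis a geodesic, its profile curve $\gamma$ lies in a totally geodesic surface $B(\rho)\subset M^3(\rho)$ containing the axis, hence is planar. Parametrizing $\gamma$ by arc length $s$ and writing $y(s,t)=\widetilde{\psi}_t(\gamma(s))$ for $S$, the induced metric takes the warped-product form $g=ds^2+G(s)^2 dt^2$. In the principal orthonormal frame $e_1=\partial_s$, $e_2=\partial_t/G$ (principal by the rotational symmetry), one has $\kappa_1=-\kappa(s)$, where $\kappa$ is the geodesic curvature of $\gamma$ in $B(\rho)$, and from the Gauss equation $\kappa_1\kappa_2=K_S-\rho=-G_{ss}/G-\rho$ we obtain
\[
\kappa_2=\frac{G_{ss}/G+\rho}{\kappa}.
\]

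The key structural input is the Codazzi-Mainardi equation. Using $\nabla_{e_1}e_2=0$, $\nabla_{e_2}e_1=(G_s/G)e_2$ and $\nabla_{e_2}e_2=-(G_s/G)e_1$, the identity $(\nabla_{e_1}h)(e_2,e_2)=(\nabla_{e_2}h)(e_1,e_2)$ simplifies to $(\kappa_2)_s=-(\kappa+\kappa_2)\,G_s/G$, i.e.\ $G_s/G=-(\kappa_2)_s/(\kappa+\kappa_2)$. The non-isoparametric hypothesis makes $\kappa$ non-constant, so the Weingarten relation $W(\kappa_1,\kappa_2)=0$ locally expresses $\kappa_2=f(\kappa)$, and the implicit prescription $W(-\kappa,P/\dot P-\kappa)=0$ becomes the ODE $P/\dot P=\kappa+f(\kappa)$. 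Differentiating in $\kappa$ gives $\ddot P/\dot P=-\dot f(\kappa)/(\kappa+f(\kappa))$, and the chain rule then produces $\dot P_s/\dot P=-(\kappa_2)_s/(\kappa+\kappa_2)$ along $\gamma$. This matches the Codazzi identity exactly, so $(\log(\dot P/G))_s=0$ and $\dot P(\kappa(s))=c\,G(s)$ for some positive constant $c$; absorbing $c$ into the normalization of $P$, I take $\dot P=G$.

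Finally, $\dot P_{ss}/\dot P=G_{ss}/G=\kappa\kappa_2-\rho$ by Gauss, so combining with $P/\dot P=\kappa+\kappa_2$ yields
\[
\dot P_{ss}+\dot P(\kappa^2+\rho)-\kappa P=\dot P\bigl[(\kappa\kappa_2-\rho)+(\kappa^2+\rho)-\kappa(\kappa+\kappa_2)\bigr]=0,
\]
which is exactly the Euler-Lagrange equation \eqref{ELB} for $\mathbf{\Theta}$ in $B(\rho)$. Closedness of $\gamma$ follows from $S$ being a torus. The main obstacle is recognising that the Codazzi-Mainardi identity for a rotational warped product delivers precisely the log-derivative relation forced on $\dot P$ by the implicit definition through $W$, so that $\dot P$ and $G$ must coincide up to a constant; with that matching in hand, Gauss furnishes \eqref{ELB} immediately. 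Residual care around isolated zeros of $\kappa_2$ or $\kappa+\kappa_2$ is absorbed into the ``locally'' clause and handled on the dense open subset where the surface is non-umbilical, which is implied by the non-isoparametric hypothesis.
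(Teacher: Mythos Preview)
Your proof is correct and uses the same geometric ingredients as the paper---the warped-product parametrization $g=ds^2+G(s)^2\,dt^2$, the identifications $\kappa_1=-\kappa$ and $\kappa_2=(G_{ss}/G+\rho)/\kappa$, and the Codazzi equation---but it inverts the logical order. The paper first invokes the Inverse Function Theorem to regard $G$ as a function of $\kappa$, sets $\dot Q(\kappa)=G$, and then integrates the Codazzi identity $\bigl(\tfrac{1}{\kappa}[G_{ss}+G(\kappa^2+\rho)]\bigr)_s=\kappa_s G$ to obtain $\dot Q_{ss}+\dot Q(\kappa^2+\rho)-\kappa Q=\lambda\kappa$ with an integration constant $\lambda$; only after setting $P=Q+\lambda$ does it read off the Weingarten relation from $h_{22}=P/\dot P-\kappa$. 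You instead start from the implicit definition of $P$ in the statement, rewrite it as the ODE $P/\dot P=\kappa+f(\kappa)$ with $\kappa_2=f(\kappa)$, and show that Codazzi forces $(\log\dot P)_s=(\log G)_s$, so $\dot P=cG$; the Euler--Lagrange equation then drops out of Gauss. Your route is a bit more direct (no integration constant to absorb) and makes the Weingarten hypothesis do visible work; the paper's route makes clearer that \emph{any} non-isoparametric rotational surface has a profile curve satisfying \eqref{ELB} for some $P$, with the Weingarten relation serving only to pin down which $P$.
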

\begin{proof} Any rotational torus $S$ of $M^3(\rho)$ is locally described as $\widetilde{S}_\gamma$, where $\gamma$ is a planar curve in $M^3(\rho)$, the profile curve. Therefore, we can assume that $\gamma$ lies fully in $B(\rho)$, a totally geodesic surface of $M^3(\rho)$.\\
From the natural parametrization of $\widetilde{S}_\gamma$, $y(s,t)=\widetilde{\psi}_t\left(\gamma(s)\right)$ where $\widetilde{\psi}_t$ is the rotation that leaves $\widetilde{S}_\gamma$ invariant, it is clear that $\kappa_1=-\kappa$ and 
$$\kappa_2=h_{22}=\frac{1}{\kappa}\left(\frac{G_{ss}}{G}+\rho\right)$$
$\kappa$ being the curvature of $\gamma$ in $B(\rho)$ and $G(s)=\langle y_t,y_t\rangle^{1/2}$ (for details see \cite{AGP}).\\
Now, after long, straightforward computations, one can see that the Gauss and Weingarten formulae lead to a PDE system to be satisfied by $y$ (see, for instance, \cite{AGP}). The compatibility conditions for this system are given by the Gauss-Codazzi equations, which in our case, since $\widetilde{\psi}_t$ are rotations and $\gamma$ is planar, boil down to
$$0=\left(\frac{1}{\kappa}\left[G_{ss}+G\left(\kappa^2+\rho\right)\right]\right)_s-\kappa_sG=\left(G h_{22}\right)_s+G_s \kappa\,.$$
Moreover, since the curvature of $\gamma$ is not constant (recall that $S$ is non-isoparametric), locally, by the Inverse Function Theorem, we can suppose that $s$ is a function of $\kappa$ and define $\dot{Q}(\kappa)=G(\kappa)$, where the upper dot denotes derivative with respect to $\kappa$. Therefore, the Gauss-Codazzi equation can be expressed in the following way
\begin{equation}\label{proof}
\dot{Q}_{ss}+\dot{Q}\left(\kappa^2+\rho\right)-\kappa Q=\lambda \kappa\,,
\end{equation}
for some $\lambda\in\mathbb{R}$. If we call $P(\kappa)=Q(\kappa)+\lambda$, then $\gamma$ is a planar closed critical curve of $\mathbf{\Theta}(\gamma)$, \eqref{energyB}.\\
Finally, note that the Weingarten relation becomes
$$0=W(\kappa_1,\kappa_2)=W(-\kappa,h_{22})=W(-\kappa,P/\dot{P}-\kappa)$$
since using \eqref{proof} together with $P(\kappa)=Q(\kappa)+\lambda$ in the definition of $h_{22}$ gives that
$$h_{22}=\frac{Q+\lambda}{\dot{Q}}-\kappa=\frac{P}{\dot{P}}-\kappa\,.$$
That is, the function $P(\kappa)$ is implicitly given by the Weingarten relation. \end{proof}

Observe that if $S$ is a non-isoparametric rotational torus in $M^3(\rho)$, then locally it is always a Weingarten torus. Indeed, as described in the previous proof, $S$ can be locally described as $\widetilde{S}_\gamma$ where $\gamma\subset B(\rho)$ (a totally geodesic surface of $M^3(\rho)$), and the principal curvatures are $-\kappa$ and $h_{22}$, respectively. In particular, from the definition of $h_{22}$ (or, equivalently, from the Gauss equation) we have
$$\kappa_2=h_{22}=-\frac{1}{\kappa_1}\left(\frac{G_{ss}}{G}+\rho\right)$$
where $G\equiv G(s)$. Finally, recall that from the Inverse Function Theorem, locally, the parameter $s$ can be regarded as a function of $\kappa=-\kappa_1$ and, hence, we have a relation between the principal curvatures, $\kappa_1$ and $\kappa_2$.

We prove now the main theorems of this section, which relate critical tori of mean curvature energies in total spaces of Killing submersions with Weingarten tori in Riemannian 3-space forms. The first result is the following.

\begin{theorem}\label{equiv1} Let $S$ be a rotational Weingarten torus in a Riemannian 3-space form $M^3(\rho)$ verifying $W(\kappa_1,\kappa_2)=0$ between its principal curvatures and let $B(\rho)$ denote the totally geodesic surface of $M^3(\rho)$ where the profile curve of $S$ lies. Then, for each function $\tau\in\mathcal{C}^\infty\left(B(\rho)\right)$, there exists a vertical torus critical for the mean curvature energy
$$\mathcal{F}(S)=\int_S P(2 H)\,dA$$
in the total space of the Killing submersion $\pi:M(\rho,\tau)\rightarrow B(\rho)$.
\end{theorem}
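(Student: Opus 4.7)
My plan is to combine the converse direction (Theorem \ref{converse}) with the symmetry-reduction characterization (Theorem \ref{characterization}), bridged by the existence theory for Killing submersions over $B(\rho)$. The strategy consists of extracting a planar closed critical curve in $B(\rho)$ from the rotational Weingarten torus, lifting it via an arbitrary Killing submersion, and then recognizing the preimage as critical for the corresponding mean curvature energy.

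First I would apply Theorem \ref{converse} to the given rotational Weingarten torus $S\subset M^3(\rho)$, assuming $S$ is non-isoparametric (the isoparametric subcase being trivially covered, since constant principal curvatures force the profile curve to have constant geodesic curvature, for which the Euler-Lagrange equation \eqref{ELB} is automatic for any compatible $P$). This step produces both the function $P(\kappa)$, determined implicitly by the relation $W(-\kappa,P(\kappa)/\dot{P}(\kappa)-\kappa)=0$, and the profile curve $\gamma$ of $S$ which, viewed as a planar curve inside a totally geodesic surface $B(\rho)\subset M^3(\rho)$, satisfies the Euler-Lagrange equation for $\mathbf{\Theta}(\gamma)=\int_\gamma P(\kappa)\,ds$. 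Since $S$ is compact and rotational, the profile $\gamma$ is globally a closed simple curve in $B(\rho)$, hence a closed critical curve of $\mathbf{\Theta}$.

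Next, for the prescribed bundle curvature $\tau\in\mathcal{C}^\infty(B(\rho))$, I would invoke the existence result of \cite{BGP} to obtain a Killing submersion $\pi:M(\rho,\tau)\rightarrow B(\rho)$ with base Gaussian curvature $\rho$ and bundle curvature $\tau$. After passing, if necessary, to a suitable quotient by a discrete subgroup of vertical translations so that $\mathcal{G}\cong\mathbb{S}^1$, the fibers become compact; then the preimage $S_\gamma=\pi^{-1}(\gamma)$ of the closed curve $\gamma$ is an immersed vertical torus in $M(\rho,\tau)$.

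Finally, Theorem \ref{characterization} applies verbatim: since $\gamma$ is a closed critical curve of $\mathbf{\Theta}(\gamma)=\int_\gamma P(\kappa)\,ds$ in $B(\rho)$, its vertical torus $S_\gamma$ is a critical point of $\mathcal{F}(S)=\int_S P(2H)\,dA$ in the total space $M(\rho,\tau)$, which is precisely the conclusion. The main technical subtlety lies not in these lifting steps but rather in the earlier appeal to Theorem \ref{converse}: one must ensure that the implicit relation $W(-\kappa,P/\dot{P}-\kappa)=0$ genuinely determines a smooth function $P$ on the range of curvatures attained by $\gamma$, which is where the nontriviality of the Weingarten relation $W$ together with the non-isoparametric hypothesis ($\dot{P}(\kappa)\neq 0$) enter in an essential way.
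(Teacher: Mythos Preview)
Your proposal is correct and follows essentially the same three-step route as the paper: apply Theorem \ref{converse} to extract the closed critical profile curve $\gamma\subset B(\rho)$ and the function $P$, invoke \cite{BGP} for the existence of the Killing submersion $\pi:M(\rho,\tau)\to B(\rho)$, and then conclude via Theorem \ref{characterization}. Your added remarks on the isoparametric subcase, on passing to a quotient for compact fibers, and on the smoothness of $P$ from the implicit Weingarten relation are reasonable elaborations, though note that the profile curve need not be simple (only closed) for the argument to go through.
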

\begin{proof} From Theorem \ref{converse}, since $S$ is a rotational Weingarten torus, then its profile curve $\gamma$ is a planar closed critical curve of $\mathbf{\Theta}(\gamma)$, \eqref{energyB}, where $P(\kappa)$ is determined by the Weingarten relation. That is, planarity implies that $\gamma$ is fully contained in the totally geodesic surface $B(\rho)$. \\
Now, for a given function $\tau\in\mathcal{C}^\infty\left(B(\rho)\right)$ we construct the Killing submersion over $B(\rho)$ with bundle curvature $\tau$, $\pi:M(\rho,\tau)\rightarrow B(\rho)$. Recall that the existence of these Killing submersions was proved in \cite{BGP} (see also \cite{M}).\\
Finally, we apply Theorem \ref{characterization} to draw the conclusion. \end{proof}

Conversely, we have the following theorem.

\begin{theorem}\label{equiv2} Let $\pi:M\rightarrow B(\rho)$ be any Killing submersion and denote by $S_\gamma$ a vertical torus based on $\gamma\subset B(\rho)$ critical for the mean curvature energy
$$\mathcal{F}(S)=\int_S P(2 H)\,dA$$
in $M$. Then, the binormal evolution torus $\widetilde{S}_\gamma$ in $M^3(\rho)$ generated by evolving $\gamma$ under its associated binormal flow verifies the relation between its principal curvatures
$$\kappa_1=\kappa_2-\frac{P(\kappa)}{\dot{P}(\kappa)}$$
where $\kappa=-\kappa_1$ is the curvature of $\gamma$.
\end{theorem}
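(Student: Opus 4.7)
The plan is to combine the two main results already established in the paper: Theorem \ref{characterization} (the symmetry reduction that identifies critical vertical tori with closed critical curves in the base) and Theorem \ref{property} (which computes the Weingarten relation satisfied by binormal evolution tori in space forms). Neither new variational machinery nor fresh PDE analysis should be required; the task is essentially to chain these two results together while keeping track of the hypotheses.

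First I would invoke Theorem \ref{characterization}: since $S_\gamma=\pi^{-1}(\gamma)$ is a vertical torus critical for $\mathcal{F}(S)=\int_S P(2H)\,dA$ in the total space $M$ of the Killing submersion over $B(\rho)$, the base curve $\gamma$ must be a closed critical curve of $\mathbf{\Theta}(\gamma)=\int_\gamma P(\kappa)\,ds$ in $B(\rho)$. This step is direct: the hypothesis of compact fibers is implicit in the word ``torus'', and the identification $H=\tfrac{1}{2}(\kappa\circ\pi)$ of formula \eqref{k=2H} converts the integrand exactly as stated.

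Next I would embed the picture into the space form. Since $B(\rho)$ arises as a totally geodesic surface of $M^3(\rho)$, the curve $\gamma$ (closed and critical for $\mathbf{\Theta}$) may be regarded as a planar closed curve in $M^3(\rho)$, as discussed at the start of Section \ref{Sec5}. Its associated binormal flow is generated by the extension of the Killing vector field $\mathcal{I}=\dot{P}(\kappa)B$ along $\gamma$, producing the binormal evolution torus $\widetilde{S}_\gamma\subset M^3(\rho)$ referenced in the statement. At this point I would apply Theorem \ref{property} directly to conclude that the principal curvatures of $\widetilde{S}_\gamma$ satisfy
\begin{equation*}
\kappa_1=\kappa_2-\frac{P(\kappa)}{\dot{P}(\kappa)},
\end{equation*}
with the identification $\kappa_1=-\kappa$ coming from the adapted frame used in the proof of Theorem \ref{property}.

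The only subtlety, and hence the main obstacle, is verifying that the hypotheses of Theorem \ref{property} really hold here. Specifically, one needs the constant $d$ arising from the first integral \eqref{FIB} to be positive (so that the binormal rotations are of spherical type and $\widetilde{S}_\gamma$ is genuinely a torus in $M^3(\rho)$), and one should rule out (or handle separately) the constant-curvature case, which would produce an isoparametric surface for which \eqref{WR} is still trivially valid. I would briefly remark that if $\kappa$ is non-constant then Theorem \ref{property} applies verbatim, while if $\kappa$ is constant then $\widetilde{S}_\gamma$ is a totally umbilical or spherical cylinder of $M^3(\rho)$ by Cartan's classification cited in Section \ref{Sec5}, for which the linear relation \eqref{WR} is again satisfied. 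This disposes of both sub-cases and completes the proof.
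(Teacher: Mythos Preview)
Your proposal is correct and follows essentially the same route as the paper: apply Theorem~\ref{characterization} to deduce that $\gamma$ is a closed critical curve of $\mathbf{\Theta}$ in $B(\rho)$, embed $B(\rho)$ totally geodesically in $M^3(\rho)$, and then invoke Theorem~\ref{property}. Your additional remarks on the positivity of $d$ and on the constant-curvature case are more careful than the paper's own two-line proof, which leaves those checks implicit.
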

\begin{proof} For any Killing submersion, since $S_\gamma$ is a vertical torus, the curve $\gamma$ is necessarily closed. Moreover, as a consequence of Theorem \ref{characterization}, $S_\gamma$ critical for $\mathcal{F}(S)$, \eqref{energy}, implies that $\gamma\subset B(\rho)$ is a closed critical curve of $\mathbf{\Theta}(\gamma)$, \eqref{energyB}.\\
At this point, consider $B(\rho)\subset M^3(\rho)$ immersed as a totally geodesic surface, then the construction of binormal evolution tori explained in Section \ref{Sec5}, together with Theorem \ref{property}, concludes the proof. \end{proof}

In particular, let us assume that $\tau\in\mathbb{R}$ so that $M(\rho,\tau)$ becomes a homogeneous 3-space locally described by a BCV space, i.e. $\mathbb{E}(\rho/4,2\tau)$. Then, using Theorem \ref{equiv1} and Theorem \ref{equiv2}, we conclude with the following corollary.

\begin{corollary}\label{last} Let $M=\mathbb{E}(\rho/4, 2\tau)$ with $\tau\in\mathbb{R}$. Then the vertical torus $S_\gamma$ based on $\gamma\subset B(\rho)$ is critical for the mean curvature energy 
$$\mathcal{F}(S)=\int_S P(2H)\,dA$$
in $M$ if and only if the binormal evolution torus $\widetilde{S}_\gamma$ in $M^3(\rho)$ generated by evolving $\gamma$ under its associated binormal flow is a Weingarten torus for the relation
$$\kappa_1=\kappa_2-\frac{P(\kappa)}{\dot{P}(\kappa)}$$
between its principal curvatures, where $\kappa=-\kappa_1$ denotes the curvature of $\gamma$.\\ Moreover, for each of the binormal evolution tori there are either three $(\rho=0)$ or four $(\rho\neq 0)$ spaces where the associated vertical tori are critical, depending on the constant $\tau\in\mathbb{R}$.
\end{corollary}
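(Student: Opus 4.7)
The plan is to read off both halves of the biconditional as direct specializations of Theorems \ref{equiv1} and \ref{equiv2}, and then to deduce the counting statement from the classification of simply connected BCV spaces. The crucial preliminary remark is that when $\tau$ is a constant in $\mathbb{R}$, the total space $M(\rho, \tau)$ of $\pi : M \to B(\rho)$ is locally isometric to the BCV space $\mathbb{E}(\rho/4, 2\tau)$, since, as recalled in Section \ref{Sec4}, the BCV family is precisely the two-parameter family of Killing submersions with constant base Gaussian curvature and constant bundle curvature.

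For the forward direction, if $S_\gamma \subset \mathbb{E}(\rho/4, 2\tau)$ is critical for $\mathcal{F}$, then viewing $\mathbb{E}(\rho/4, 2\tau)$ as the total space of the Killing submersion $\pi : M(\rho,\tau) \to B(\rho)$, Theorem \ref{equiv2} immediately yields the binormal evolution torus $\widetilde{S}_\gamma \subset M^3(\rho)$ with the asserted Weingarten relation $\kappa_1 = \kappa_2 - P(\kappa)/\dot{P}(\kappa)$ between its principal curvatures. Conversely, if $\widetilde{S}_\gamma$ satisfies this relation, I would apply Theorem \ref{equiv1} to the constant function $\tau \in \mathcal{C}^\infty(B(\rho))$ to conclude that the vertical torus $S_\gamma = \pi^{-1}(\gamma)$ is critical for $\mathcal{F}$ in the corresponding total space $\mathbb{E}(\rho/4, 2\tau)$. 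Thus the equivalence is an immediate packaging of the two preceding theorems; no new variational computation is required.

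For the counting statement I would argue as follows. Fixing the binormal evolution torus $\widetilde{S}_\gamma$ fixes $\rho$ and the profile curve $\gamma$, so the associated critical vertical tori live in the one-parameter family $\{\mathbb{E}(\rho/4, 2\tau) : \tau \in \mathbb{R}\}$. The strategy is to enumerate the distinct simply connected BCV isometry types realized as $\tau$ varies, splitting the analysis into the three cases $\rho = 0$, $\rho > 0$, $\rho < 0$. I would use the sign of the BCV discriminant $\rho - 4\tau^2$ to separate the product case ($\tau = 0$), the space-form member ($4\tau^2 = \rho$, which only occurs for $\rho > 0$), and the twisted regimes on either side ($\rho - 4\tau^2 > 0$ and $< 0$), together with the sign of $\tau$ itself. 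Reading off the corresponding BCV types (flat $\mathbb{R}^3$, Heisenberg, $\mathbb{S}^2\times\mathbb{R}$, $\mathbb{H}^2\times\mathbb{R}$, Berger spheres, round $\mathbb{S}^3$, $\widetilde{SL}(2,\mathbb{R})$) from the metric $g_{a,b}$ of Section \ref{Sec4} produces the three spaces for $\rho = 0$ and the four spaces for $\rho \neq 0$.

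The main obstacle is not conceptual but bookkeeping: one must verify, in each case, exactly which BCV types occur in the family $\{\mathbb{E}(\rho/4, 2\tau)\}$ and check that the regions of $\tau$-parameter identified by the discriminant and sign analysis really do produce non-isometric ambient spaces. This is a finite inspection from the explicit form of $g_{a,b}$, and no additional machinery beyond the BCV classification already invoked in Section \ref{Sec4} is required.
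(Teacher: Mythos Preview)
Your approach to the biconditional is exactly what the paper does: the corollary is stated without proof, introduced only by the sentence ``using Theorem \ref{equiv1} and Theorem \ref{equiv2}, we conclude with the following corollary.'' Your reading of the forward implication from Theorem \ref{equiv2} and the converse from Theorem \ref{equiv1} (specialized to constant $\tau$) is precisely the intended argument, and your observation that constant $\tau$ makes $M(\rho,\tau)$ locally a BCV space is the right link.

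For the counting statement the paper offers no argument whatsoever, so your sketch already goes beyond the paper's treatment. One caution on the bookkeeping: the sign of $\tau$ by itself does not separate isometry types, since the map $z\mapsto -z$ (or $(x,y)\mapsto (y,x)$) carries $g_{a,b}$ to $g_{a,-b}$, so $\mathbb{E}(\rho/4,2\tau)$ and $\mathbb{E}(\rho/4,-2\tau)$ are isometric. The relevant invariants are therefore whether $\tau=0$ and the sign of $\rho-4\tau^2$, not the sign of $\tau$. With that correction your enumeration strategy is sound, though you should be aware that the paper never makes precise what ``spaces'' is counting (simply connected models versus their quotients listed in Section \ref{Sec4}), and a naive count of simply connected BCV types for fixed $\rho$ does not obviously yield three and four; you will need to decide on an interpretation and check it carefully rather than rely on the discriminant split alone.
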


As an illustration, we consider the 2-dimensional sphere of curvature $4$, $\mathbb{S}^2(4)$, and take a constant bundle curvature, $1=\tau\in\mathcal{C}^\infty\left(\mathbb{S}^2(4)\right)$. For these choices, the Killing submersion $\pi$ is just the \emph{standard Hopf mapping}
$$\pi:\mathbb{S}^3(1)\rightarrow \mathbb{S}^2(4)$$
which can be defined as follows. Take the complex plane $\mathbb{C}$ and endow $\mathbb{C}^2$ with the Riemannian metric
$$\widetilde{g}\left((z_1,z_2),(\omega_1,\omega_2)\right)=\Re \left(z_1\bar{\omega}_1+z_2\bar{\omega}_2\right)$$
where $z_i$ and $\omega_i$, $i=1,2$ are any complex numbers. In $\mathbb{C}^2$, define the map $\widetilde{\pi}:\mathbb{C}^2\rightarrow\mathbb{C}^2$ by
$$\widetilde{\pi}(z,\omega)=\frac{1}{2}\left(\lvert z\rvert^2-\lvert\omega\rvert^2,2\bar{z}\omega\right).$$
Then, the restriction of $\widetilde{\pi}$ to the hyperquadric $\widetilde{g}\left((z_1,z_2),(z_1,z_2)\right)=1$, i.e. to the 3-dimensional sphere $\mathbb{S}^3(1)$ gives the standard Hopf mapping $\pi$.

Let $\gamma$ be a curve in $\mathbb{S}^2(4)$. Then, as mentioned before, its vertical lift $S_\gamma$ is a flat surface in $\mathbb{S}^3(1)$. The covering map $\Psi:\mathbb{R}^2\rightarrow S_\gamma$ defined by $\Psi(s,t)=e^{it}\bar{\gamma}(s)$, where $\bar{\gamma}(s)$ denotes a horizontal lift of $\gamma$ can be used to parametrize $S_\gamma$. Indeed, explicit parametrizations of $S_\gamma$ can be obtained in the following way: take an arbitrary curve $\gamma(s)=\left(A_1(s),0,A_2(s),A_3(s)\right)$ in $\mathbb{S}^2(4)$, then the horizontal lifts of $\gamma$ via $\pi$ are given by
\begin{eqnarray*}
\bar{\gamma}(s)&=&\left(\alpha_1(s)\cos\beta(s),\alpha_1(s)\sin\beta(s),\alpha_2(s)\cos\beta(s)-\alpha_3(s)\sin\beta(s),\right.\\
&&\left. \, \alpha_2(s)\sin\beta(s)+\alpha_3(s)\cos\beta(s)\,\right),
\end{eqnarray*}
where
$$\alpha_1(s)=\sqrt{A_1(s)+1/2}\,,\quad\quad\quad \alpha_i(s)=\frac{A_i(s)}{\sqrt{A_1(s)+1/2}}$$
for $i=2,3$; and,
$$\beta(s)=\pm\int \frac{A_3(s)A_2'(s)-A_2(s)A_3'(s)}{A_1(s)+1/2}\,ds\,.$$
Recall that if $\gamma$ is a closed curve, then $S_\gamma$ is a closed surface; i.e. a flat torus. However, the horizontal lift of $\gamma$, $\bar{\gamma}$, may not be closed because the non-trivial holonomy. Although, if in addition, the area enclosed by $\gamma(s)$ in $\mathbb{S}^2(4)$ is a rational multiple of $\pi$, then there are $m\in\mathbb{Z}$ such that the horizontal lift of an $m$-cover of $\gamma(s)$ is closed, \cite{ABG}.

Let us consider now the Blaschke's energy $\mathbf{\Theta}(\gamma)$, \eqref{energyB}, for $P(\kappa)=\sqrt{\kappa}$, that is, $\mathcal{B}(\gamma)$, \eqref{EB}, with $\lambda=0$, acting on the space of curves immersed in $\mathbb{S}^2(4)$. As proved in \cite{AGP3}, there exists a biparametric family of closed critical curves $\{\gamma_{m,n}\}$ for $m<2n<\sqrt{2}m$. The parameters $n$ and $m$ have a geometric meaning. Indeed, $n$ denotes the number of times the critical curve goes around the pole to close up, and $m$ is the number of lobes the critical curve has.

These critical curves were used to construct (immersed) minimal rotational tori in $\mathbb{S}^3(4)$, \cite{AGP3} (see also Section \ref{Sec5}). On the other hand, if we combine the above information with Corollary \ref{ilus}, we conclude with the existence of a biparametric family of Hopf tori critical for the energy $\mathcal{F}(S)$, \eqref{energy} for $P(2H)=\sqrt{H}$. In fact, Corollary \ref{last} relates both tori.

In Figure \ref{dib} we show this relation. For the values $n=2$ and $m=3$, we consider the closed critical curve $\gamma_{3,2}$ for the Blaschke's energy $\mathbf{\Theta}(\gamma)$, \eqref{energyB}, for $P(\kappa)=\sqrt{\kappa}$ in $\mathbb{S}^2(4)$ (Figure \ref{dib}, left). We evolve this curve under its associated binormal flow to construct a minimal rotational torus in $\mathbb{S}^3(4)$ whose profile curve is, precisely, $\gamma_{3,2}$ (Figure \ref{dib}, center). Finally, we also consider the associated Hopf tori in $\mathbb{S}^3(1)$ based on $\gamma_{3,2}$ which is a critical flat torus for the energy $\mathcal{F}(S)$, \eqref{energy} for $P(2H)=\sqrt{H}$ (Figure \ref{dib}, right).

\begin{figure}[h!]
\makebox[\textwidth][c]{\centering\includegraphics{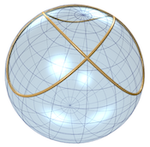}\quad\includegraphics{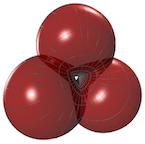}\quad\includegraphics{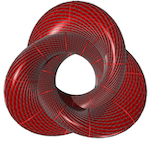}}
\caption{The closed critical curve $\gamma_{3,2}$ for the Blaschke's energy lying on the sphere $\mathbb{S}^2(4)$ (Left); the stereographic projection of the minimal rotational torus of $\mathbb{S}^3(4)$ constructed as the binormal evolution of $\gamma_{3,2}$ (Center); and, the stereographic projection of the associated Hopf torus based on $\gamma_{3,2}$ (Right).}
\label{dib}
\end{figure}

\section*{Acknowledgments}

Research partially supported by MINECO-FEDER grant PGC2018-098409-B-100, Gobierno Vasco grant IT1094-16 and by Programa Posdoctoral del Gobierno Vasco, 2018.


\begin{thebibliography}{40}

\bibitem{ACG} G. Arreaga, R. Capovilla and J. Guven. Frenet-Serret Dynamics. \textit{Class. Quant. Grav.} \textbf{18} (2001), 5065--5084.

\bibitem{ABG} J. Arroyo, M. Barros and O. J. Garay. Some Examples of Critical Points for the Total Mean Curvature Functional. \textit{Proc. Edinb. Math. Soc.} \textbf{43} (2000), 587--603.

\bibitem{AGM} J. Arroyo, O. J. Garay and J. J. Menc\'ia. Closed Generalized Elastic Curves in $\mathbb{S}^2(1)$. \textit{J. Geom. Phys.} \textbf{48} (2003), 339--353.

\bibitem{AGP} J. Arroyo, O. J. Garay and A. P\'ampano. Binormal Motion of Curves with Constant Torsion in 3-Spaces. \textit{Adv. Math. Phys.} \textbf{2017} (2017).

\bibitem{AGP2} J. Arroyo, O. J. Garay and A. P\'ampano. Constant Mean Curvature Invariant Surfaces and Extremals of Curvature Energies. \textit{J. Math. Anal. App.} \textbf{462} (2018), 1644--1668.

\bibitem{AGP3} J. Arroyo, O. J. Garay and A. P\'ampano. Delaunay Surfaces in $\mathbb{S}^3(\rho)$. \textit{Filomat} \textbf{33-4} (2019), 1191--1200.

\bibitem{B} M. Barros. Willmore Tori in Non-Standard $3$-Spheres. \textit{Math. Proc. Camb. Phil. Soc.} \textbf{121} (1997), 321--324.

\bibitem{BCO}  M. Barros, M. Caballero and M. Ortega. Rotational Surfaces in $\mathbb{L}^3$ and Solutions of the Nonlinear Sigma Model. \textit{Comm. Math. Phys.} \textbf{290} (2009), 437--477.

\bibitem{BFG} M. Barros, A. Ferr\'andez and O. J. Garay. Equivariant Willmore Surfaces in Conformal Homogeneous Three Spaces. \textit{J. Math. Ann. App.} \textbf{431-1} (2015), 342--364.

\bibitem{BGP} M. Barros, O. J. Garay and A. P\'ampano. Willmore-like Tori in Killing Submersions. \textit{Adv. Math. Phys.} \textbf{2018} (2018).

\bibitem{Bl} W. Blaschke. \emph{Vorlesungen uber Differentialgeometrie und Geometrische Grundlagen von Einsteins Relativitatstheorie I-III: Elementare Differenntialgeometrie}. Springer, Berlin, Germany, 1930.

\bibitem{BG} R. Bryant and P. Griffiths. Reduction of Order for Constrained Variational Problems and $\int_\gamma \frac{\kappa^2}{2}ds$. \textit{Amer. J. Math.} \textbf{108} (1986), 525--570.

\bibitem{C} E. Cartan. Famillies de Surfaces Isoparametriques dans les Espaces a Courbure Constante. \textit{Annali di Mat.} \textbf{17} (1938), 177--191.

\bibitem{C2} E. Cartan. \textit{Lecons sur la Geometrie de Espaces de Riemann}. Gautier Villars, Paris, France, 1946.

\bibitem{Ch} B.-Y. Chen. Some Conformal Invariants of Submanifolds and their Applications. \textit{Boll. Unione Mat. Ital.} \textbf{6} (1974), 380--385.

\bibitem{E} L. Euler. \textit{Methodus Inveniendi Lineas Curvas Maximi Minimive Proprietate Gaudentes, Sive Solutio Problematis Isoperimetrici Lattisimo Sensu Accepti}, Bousquet, Lausannae et Genevae \textbf{24} (1744).

\bibitem{G} S. Germain. Memoire sur la Courbure des Surfaces. \textit{J. Reine Angrew. Math.} \textbf{7} (1831), 1--29.

\bibitem{GTT} A. Gruber, M. Toda and H. Tran. On the Variation of Curvature Functionals in a Space Form with Application to a Generalized Willmore Energy. \textit{Ann. Glob. Anal. Geom.} \textbf{56} (2019), 147--165.

\bibitem{H} W. Helfrich. Elastic Properties of Lipid Bilayers: Theory and Possible Experiments. \textit{Zeit. Naturfor. C.} \textbf{28} (1973), 693--703.

\bibitem{LS} J. Langer and D. A. Singer. The Total Squared Curvature of Closed Curves. \textit{J. Diff. Geom.} \textbf{20} (1984), 1--22.

\bibitem{L} R. Levien. The Elastica: A Mathematical History. \textit{Technical Report}, No. UCB/EECS-2008-103, Univ. of Berkeley.

\bibitem{LP} R. L\'opez and A. P\'ampano. Classification of Rotational Surfaces in Euclidean Space Satisfying a Linear Relation between their Principal Curvatures. \textit{Math. Nach.} \textbf{293-4} (2020), 735--753.

\bibitem{LP1} R. L\'opez and A. P\'ampano. Classification of Rotational Surfaces with Constant Skew Curvature in 3-Space Forms. \emph{J. Math. Anal. Appl.} \textbf{489} (2020), 124195.

\bibitem{LP2} R. L\'opez and A. P\'ampano. Rotational Surfaces of Constant Astigmatism in Space Forms. \emph{J. Math. Anal. Appl.} \textbf{483-1} (2020), 123602.

\bibitem{M}  J. M. Manzano. On the Classification of Killing Submersions and Their Isometries. \textit{Pacific J. Math.} \textbf{270} (2014), 367--392.

\bibitem{MN} F. C. Marques and A. Neves. Min-Max Theory and the Willmore Conjecture. \textit{Ann. Math. Second Series} \textbf{179-2} (2014), 683--782.

\bibitem{N} J. C. Nitsche. Boundary Value Problems for Variational Integrals Involving Surface Curvatures. \textit{Q. App. Math.} \textbf{51-2} (1993), 363--387.

\bibitem{P} R. S. Palais. The Principle of Symmetric Criticality. \textit{Commun. Math. Phys.} \textbf{69} (1979), 19--30.

\bibitem{Pa} A. P\'ampano. A Variational Characterization of Profile Curves of Invariant Linear Weingarten Surfaces. \emph{J. Diff. Geom. Appl.} \textbf{68} (2020), 101564.

\bibitem{Pi} U. Pinkall. Hopf Tori in $S^3$. \textit{Invent. Math.} \textbf{81-2} (1985), 379--386.

\bibitem{R} A. Ros. The Willmore Conjecture in the Real Projective Space. \textit{Math. Soc.} \textbf{16-5} (1984), 531--534.

\bibitem{T} G. Thomsen. Uber Konforme Geometrie I: Grundlagen der Konformen Flachentheorie. \textit{Abk. Math. Sem. Univ. Hamburg} \textbf{3} (1923), 31--56.

\bibitem{To} P. Topping. Towards the Willmore Conjecture. \textit{Calc. Var. Part. Diff. Eqs.} \textbf{11-4} (2000), 361--393.

\bibitem{V}  G. Vranceanu. \textit{Lecons de Geometrie Differentielle}. Ed. Acad. Rep. Pop. Roum., vol I, Bucarest, 1957.

\bibitem{W} J. L. Weiner. On a Problem of Chen, Willmore et Al. \textit{Indiana Univ. Math. J.} \textbf{27} (1978), 19--35.

\bibitem{We} J. Weingarten. Ueber eine Klasse auf Einander Abwickelbarer Flachen. \textit{J. Reine Angrew. Math.} \textbf{59} (1861), 382--393.

\bibitem{Wi} T. J. Willmore. Mean Curvature of Immersed Surfaces. \textit{An. Sti. Univ. Al. I. Cuza Iasi Sec. I. a Mat. (N.S.)} \textbf{14} (1968), 99--103.

\end{thebibliography}
\end{document}